\newtheorem{remark}[theorem]{Remark}
\newtheorem{examples}[theorem]{Examples}
\title{Subspace correction methods for total variation and $\ell_1-$minimization}
\author{Massimo Fornasier\thanks{Johann Radon Institute for Computational and Applied Mathematics (RICAM),
Austrian Academy of Sciences, Altenbergerstrasse 69, A-4040, Linz, Austria Email: {\tt massimo.fornasier@oeaw.ac.at}} \and Carola-Bibiane Sch\"onlieb\thanks{Department of Applied Mathematics and Theoretical Physics (DAMTP),
Centre for Mathematical Sciences,
Wilberforce Road,
Cambridge CB3 0WA,
United Kingdom.Email: {\tt c.b.s.schonlieb@damtp.cam.ac.uk} } }
\begin{document}

\maketitle

\begin{abstract}
This paper is concerned with the numerical minimization of energy functionals in Hilbert spaces involving convex constraints coinciding with a semi-norm for a subspace.
The optimization is realized by alternating minimizations of the functional on a sequence of orthogonal subspaces. On each subspace an iterative proximity-map algorithm is implemented via \emph{oblique thresholding}, which is the main new tool introduced in this work. We provide convergence conditions for the algorithm in order to compute minimizers of the target energy. Analogous results are derived for a parallel variant of the algorithm. Applications are presented in domain decomposition methods for singular elliptic PDE's arising in total variation minimization and in accelerated sparse recovery algorithms based on $\ell_1$-minimization. We include numerical examples which show efficient solutions to classical problems in signal and image processing.
\end{abstract}

\begin{keywords} 
Domain decomposition method, subspace corrections, convex optimization, parallel computation, discontinuous solutions, total variation minimization, singular elliptic PDE's, $\ell_1$-minimization, image and signal processing
\end{keywords}

\begin{AMS}
65K10, 
65N55  
65N21, 
65Y05  
90C25, 
52A41, 
49M30, 
49M27, 
68U10  
\end{AMS}

\pagestyle{myheadings}
\thispagestyle{plain}
\markboth{M. FORNASIER AND C.-B. SCH\"ONLIEB}{SUBSPACE CORRECTION METHODS FOR OPTIMIZATION}

\section{Introduction}
Let $\mathcal H$ be a real separable Hilbert space. We are interested in the numerical minimization in $\mathcal H$ of the general form of functionals
$$
\mathcal J(u) := \| T u - g \|_{\mathcal H}^2 + 2 \alpha \psi(u),
$$
where $T \in \mathcal L(\mathcal H)$ is a bounded linear operator, $g \in \mathcal H$ is a datum, and $\alpha>0$ is a fixed constant.
The function $\psi:\mathcal H \to \mathbb{R}_+ \cup\{+\infty\}$ is a semi-norm for a suitable subspace $\mathcal H^\psi$ of $\mathcal H$. In particular, we investigate splittings into arbitrary orthogonal subspaces $\mathcal H = V_1 \oplus V_2$ for which we may have
$$
\psi(\pi_{V_1}(u) + \pi_{V_2}(v)) \neq \psi(\pi_{V_1}(u)) + \psi(\pi_{V_2}(v)), \quad u,v \in \mathcal H,
$$
where $\pi_{V_i}$ is the orthogonal projection onto $V_i$.
With this splitting we want to minimize $\mathcal J$ by suitable instances of the following alternating algorithm:  Pick an initial $V_1\oplus V_2 \ni  u_1^{(0)}+ u_2^{(0)} : = u^{(0)} \in \mathcal H^\Psi$, for example $u^{(0)}=0$, and iterate
$$
\left \{ 
\begin{array}{ll}
u_1^{(n+1)} \approx \argmin_{v_1 \in V_1}  \mathcal  J(v_1 +u_2^{(n)}) &\\
u_2^{(n+1)} \approx  \argmin_{v_2 \in V_2} \mathcal J(u_1^{(n+1)} + v_2) &\\
u^{(n+1)}:=u_1^{(n+1)} + u_2^{(n+1)}.
\end{array}
\right.
$$
 This algorithm is implemented by solving the subspace minimizations via an {\it oblique thresholding} iteration.
We provide a detailed analysis of the convergence properties of this sequential algorithm and of its modification for parallel computation.
We motivate this rather general approach by two relevant applications in domain decomposition methods for total variation minimization and in accelerated sparse recovery algorithms based on $\ell_1$-minimization. Nevertheless, the applicability of our results reaches far beyond these particular examples.

\subsection{Domain decomposition methods for singular elliptic PDE's}

Domain decomposition methods were introduced as techniques for solving partial differential equations based on a decomposition of the spatial domain of the problem into
several subdomains \cite{Li88,BrPaWaXu91,Xu92,ChaMat94,QuVa99,XuZi00,LeeXuZi03,BDHP03,NaSzy05}. The initial equation restricted to the subdomains defines a sequence of new local problems. The main goal is to solve the initial equation via the solution of the local problems. This procedure induces a dimension reduction which is the major responsible of the success of such a method.
Indeed, one of the principal motivations  is the formulation of solvers which can be easily parallelized. 
\\
We apply the theory and the algorithms developed in this paper to adapt domain decompositions to the minimization of  functionals with total variation constraints.
Differently from situations classically encountered in domain decomposition methods for nonsingular PDE's, where solutions are usually supposed at least continuous, in our case the interesting solutions may be discontinuous, e.g., along curves in 2D. These discontinuities may cross the interfaces of the domain decomposition patches. Hence, the crucial difficulty is the correct treatment of interfaces, with the preservation of crossing discontinuities and the correct matching where the solution is continuous instead.  We consider the minimization of the functional $\mathcal J$ in the following setting: Let $\Omega\subset\mathbb{R}^d$, for $d=1,2$, be a bounded open set with Lipschitz boundary. We are interested in the case when $\mathcal{H}=L^2(\Omega)$, $\mathcal{H}^\psi=BV(\Omega)$ and $\psi(u)=|D u|(\Omega)$, the variation of $u$. Then a domain decomposition $\Omega=\Omega_1\cup\Omega_2$  induces the space splitting into $V_i :=\{ u \in L^2(\Omega) : \textrm{supp}(u) \subset \Omega_i \}$, $i=1,2$. Hence, by means of the proposed alternating algorithm, we want to minimize  the functional
$$
\mathcal J(u) := \| T u - g \|_{L^2(\Omega)}^2 + 2 \alpha |Du|(\Omega).
$$
The minimization of energies with total variation constraints traces back to the first uses of such a functional model in noise removal in digital images as proposed by Rudin, Osher, and Fatemi \cite{ROF}. There the operator $T$ is just the identity. Extensions to more general operators $T$ and numerical methods for the minimization of the functional appeared later in several important contributions \cite{ChL,DV97, AV97,Ve01,Ch}. From these pioneering and very successful results, the scientific output related to total variation minimization  and its applications in signal and image processing increased dramatically in the last decade. It is not worth here to mention all the possible directions and contributions.
We limit ourself to mention that, to our knowledge, this paper is the first in presenting a successful domain decomposition approach to total variation minimization. The motivation is that several approaches are directed to the solution of the Euler-Lagrange equations associated to the functional $\mathcal J$, which determine a singular elliptic PDE involving the $1$-Laplace operator. 
Due to the fact that $|Du|(\Omega)$ is not differentiable, one has to discretize its subdifferential, and its characterization is indeed hard to implement numerically in a correct way. The lack of a simple characterization of the subdifferential of the total variation especially raises significant difficulties in dealing with discontinuous interfaces between patches of a domain decomposition. Our approach overcomes these difficulties by minimizing the functional via an iterative proximity-map algorithm, as proposed, e.g., in \cite{Ch}, instead of attempting the direct solution of the Euler-Lagrange equations.
It is also worth to mention that, due to the generality of our setting, our approach can be extended to more general subspace decompositions, not only those arising from a domain splitting.
This can open room to more sophisticated multiscale algorithms where $V_i$ are multilevel spaces, e.g., from a wavelet decomposition.

\subsection{Accelerated sparse recovery algorithms based on $\ell_1$-minimization}

In this application, we are concerned with the use of the alternating algorithm  to the case where $\Lambda$ is a countable index set, $\mathcal H = \ell_2(\Lambda)$, and $\psi(u) = \| u\|_{\ell_{1}(\Lambda)}:= \sum_{\lambda \in \Lambda} | u_\lambda|$.  The minimization of the functional 
$$
\mathcal J(u) := \| T u - g \|_{\ell_2(\Lambda)}^2 + 2 \alpha \|u\|_{\ell_1},
$$
proved to be an extremely efficient alternative to the well-known Tikhonov regularization \cite{enhane96}, whenever 
$$
T u = g,
$$
is an ill-posed problem and the solution $u$ is expected to be a vector with a moderate number of nonzero entries.  Indeed, the imposition of the $\ell_1$-constraint does promote a sparse solution. The use of the $\ell_1$ norm as a sparsity-promoting functional can be found first in
reflection seismology and in  deconvolution of seismic traces \cite{CM,SS,TBM}.
In the last decade more understanding of the deep motivations why $\ell_1$-minimization tends to promote sparse recovery was developed. 
Rigorous results began to appear in the late-1980's, with Donoho and Stark \cite{dost89} and Donoho and Logan \cite{dolo92}. Applications for $\ell_1$ minimization in statistical estimation began in the mid-1990's with the introduction of the LASSO algorithm \cite{ti96} (iterative thresholding). In the signal processing community,
Basis Pursuit \cite{chdo98} was proposed in compression applications for extracting a sparse signal representation from highly overcomplete dictionaries.
From these early steps the applications and understanding of $\ell_1$ minimization have continued to
increase dramatically. It is now hard to trace all the relevant results and applications and it is beyond the scope of this paper.  We shall address the interested reader to the review papers \cite{ba,cand}\footnote{The reader can also find a sufficiently comprehensive collection of the ongoing recent developments at the web-site http://www.dsp.ece.rice.edu/cs/.}. We may simply emphasize the importance of the study of $\ell_1$-minimization by saying that, due to the surprisingly effectiveness in several applications, it can be considered today as the ``modern successor'' of least squares. From this lapidary statement it follows the clear need for efficient algorithms for the minimization of $\mathcal J$. An iterative thresholding algorithm was proposed for this task \cite{CW,dadede04,dateve06,SCD, ti96}. We refer also to the recent developments \cite{fora06-1,fora07-1}. Unfortunately, despite its simplicity which makes it very attractive to users, this algorithm does not perform very well.
For this reason, together with other acceleration methods, e.g., \cite{dafolo07}, a ``domain decomposition'' algorithm  was proposed in \cite{fo07}, and we proved its effectiveness in accelerating the convergence and we provided its parallelization. There the domain is the label set $\Lambda$ which is disjointly decomposed $\Lambda = \Lambda_1 \cup \Lambda_2$. This decomposition induces an orthogonal splitting of $\ell_2(\Lambda)$ into the subspaces  $V_i = \ell_2^{\Lambda_i}(\Lambda) :=\{u \in \ell_2(\Lambda): \supp(u) \subset \Lambda_i\}$, $i=1,2$. In this paper we investigate the application of the alternating algorithm to more general orthogonal subspace decompositions and we discuss how the choice can influence convergence properties and speed-up. Again the generality of our approach allows to experiment several possible decompositions, but we limit ourself to present some key numerical examples in specific cases which help to highlight the properties, i.e., virtues and limitations, of the algorithm.

\subsection{Content of the paper}

In section 2 we illustrate the general assumptions on the convex constraint function $\psi$ and the subspace decompositions. 
In section 3 we formulate the minimization problem and motivate the use of the alternating subspace correction algorithm. With section 4 we start the construction of the algorithmic approach to the minimization, introducing the novel concept of \emph{oblique thresholding}, computed via a generalized Lagrange multiplier. In section 5 we investigate convergence properties of the alternating algorithm, presenting sufficient conditions which allow it to converge to minimizers of the target functional $\mathcal J$. The same results are presented in section 6 for a parallel variant of the algorithm. Section 7 is dedicated to applications and numerical experiments in domain decomposition methods for total variation minimization in 1D and 2D problems, and in accelerations of convergence for $\ell_1-$minimization.

\section{Preliminary Assumptions}

We begin this section with a short description of the generic notations used in this paper.

In the following  $\mathcal H$ is a real separable Hilbert space endowed with the norm $\|\cdot\|_{\mathcal H}$. 
For some countable index set $\Lambda$
we denote by $\ell_p=\ell_p(\Lambda)$, $1 \leq p \leq \infty$, 
the space of real sequences $u=(u_\lambda)_{\lambda \in \Lambda}$ 
with norm
\[
\|u\|_p \,=\, \|u\|_{\ell_p} \,:=\, 
\left(\sum_{\lambda \in \Lambda} |u_\lambda|^p\right)^{1/p}, \quad 1\leq p < \infty
\]
and $\|u\|_\infty \,:=\, \sup_{\lambda \in \Lambda} |u_\lambda|$ as usual.
If $(v_\lambda)$ is a sequence of positive weights then we define the weighted
spaces $\ell_{p,v} = \ell_{p,v}(\Lambda) = \{u, (u_\lambda v_\lambda) \in \ell_p(\Lambda)\}$
with norm
\[
\|u\|_{p,v} \,=\, \|u\|_{\ell_{p,v}} \,=\, \|(u_\lambda v_\lambda)\|_p \,=\,
\left(\sum_{\lambda \in \Lambda} v_\lambda^p |u_\lambda|^p)\right)^{1/p}
\]
(with the standard modification for $p=\infty$).
The Euclidean space is denoted by $\mathbb R ^M$ endowed with the Euclidean norm, but we will also use the $M$-dimensional
space $\ell_q^M$, i.e., $\mathbb R ^M$ endowed with the $\ell_q$-norm. By $\mathbb R_+$ we denote the non-negative
real numbers. 

Usually $\Omega \subset \mathbb R^d$ will denote an open bounded set with Lipschitz boundary. The symbol $L^p(\Omega)$ denotes the usual Lebesgue space of  $p$-summable functions, $C^k(\Omega)$ is the space of functions $k$-times continuously differentiable, and $BV(\Omega)$ the space of functions with bounded variation. For a topological vector space $V$ we denote $V'$ its topological dual. Depending on the context, the symbol $\simeq$ may define an equivalence of norms or an isomorphism of spaces or sets. The symbol $1_\Omega$ denotes the characteristic function of the set $\Omega$.

More specific notations will be defined in the paper, where they turn out to be useful.
\subsection{The convex constraint function $\psi$}

 We are given a function $\psi:\mathcal H \to \mathbb{R}_+ \cup\{+\infty\}$ with the following properties:
\begin{itemize}
\item [($\Psi 1$)] $\psi(0)=0$;
\item [($\Psi 2$)] $\psi$ is sublinear, i.e., $\psi(u+v) \leq \psi(u) + \psi(v)$ for all $u,v \in \mathcal H$;
\item [($\Psi 3$)] $\psi$ is 1-homogeneous, i.e., $\psi(\lambda u) = |\lambda| \psi (u)$ for all $\lambda \in \mathbb R$.
\item [($\Psi 4$)] $\psi$ is lower-semincontinuous in $\mathcal H$, i.e., for any converging sequence $u_n \to u$ in $\mathcal H$
$$
\psi(u) \leq \liminf_{n\in \mathbb N} \psi(u_n).
$$
\end{itemize}
Associated with $\psi$ we assume that there exists a dense subspace $\mathcal H^\psi \subset \mathcal H$ for which $\psi|_{\mathcal H^\psi}$ is a seminorm and $\mathcal H^\psi$ endowed with the norm
$$
\| u\|_{\mathcal H^\psi}:= \| u\|_{\mathcal H} + \psi(u),
$$
is a Banach space.
We do not assume instead that $\mathcal H^\psi$ is reflexive in general; note that due to the dense embedding $\mathcal H^\psi \subset \mathcal H$ we have 
$$
\mathcal H^\psi \subset \mathcal H \simeq  \mathcal H' \subset (\mathcal H^\psi)',
$$
and the duality $\langle \cdot, \cdot \rangle_{(\mathcal H^\psi)' \times \mathcal H^\psi}$ extends the scalar product on $\mathcal H$. In particular, $\mathcal H$ is weakly-$*$-dense in $(\mathcal H^\psi)'$. In the following we require
\begin{itemize}
\item [($H1$)] bounded subsets in $\mathcal H^\psi$ are sequentially bounded in another topology $\tau^\psi$ of  $\mathcal H^\psi$;
\item [($H2$)] $\psi$ is lower-semicontinuous with respect to the topology $\tau^\psi$;
\end{itemize}
In practice, we will always require also that 
\begin{itemize}
\item [($H3$)] $\mathcal H^\psi = \{ u \in \mathcal H: \psi(u) < \infty \}$.
\end{itemize}
We list in the following the specific examples we consider in this paper.
\begin{examples}
\label{ex1}
1. Let $\Omega \subset \mathbb R^d$, for $d=1,2$ be a bounded open set  with Lipschitz boundary, and $\mathcal H = L^2 (\Omega)$. We recall that for $u \in L_{loc}^1(\Omega)$
$$
V(u,\Omega) := \sup \left  \{ \int_\Omega u \dv \varphi~dx: \varphi \in \left [ C^1_c(\Omega) \right ]^d, \| \varphi \|_\infty \leq 1 \right \}
$$
is the variation of $u$ and that $u \in BV(\Omega)$ (the space of bounded variation functions, \cite{AFP,EvGa}) if and only if $V(u,\Omega) < \infty$, see \cite[Proposition 3.6]{AFP}. In such a case, $|D(u)|(\Omega) =V(u,\Omega)$, where $|D(u)|(\Omega)$ is the total variation of the finite Radon measure $D u$, the derivative of $u$ in the sense of distributions. Thus, we define $\psi(u) = V(u,\Omega)$ and it is immediate to see that $\mathcal H^\psi$ must coincide with $ BV(\Omega)$. Due to the embedding $L^2(\Omega) \subset L^1(\Omega)$ and the Sobolev embedding \cite[Theorem 3.47]{AFP} we have
$$
\| u\|_{\mathcal H^\psi} = \| u\|_2 + V(u,\Omega) \simeq \| u\|_1 + |D u|(\Omega) = \| u\|_{BV}.
$$
Hence $(\mathcal H^\psi, \| \cdot \|_{\mathcal H^\psi})$ is indeed a Banach space. It is known that $V(\cdot,\Omega)$ is lower-semincontinuous with respect to $L^2(\Omega)$ \cite[Proposition 3.6]{AFP}. We say that a sequence $(u_n)_n$ in $BV(\Omega)$ converges to $u \in BV(\Omega)$ with the weak-$*$-topology if $(u_n)_n$ converges to $u$ in $L^1(\Omega)$ and $D u_n$ converges to $D u$ with the weak-$*$-topology in the sense of the finite Randon measures. Bounded sets in $BV(\Omega)$ are sequentially weakly-$*$-compact (\cite[Proposition 3.13]{AFP}), and $V(\cdot,\Omega)$ is lower-semicontinuous with respect to the weak-$*$-topology. 

2. Let $\Lambda$ be a countable index set and $\mathcal H = \ell_2(\Lambda)$. For a strictly positive sequence $w = (w_\lambda)_{\lambda \in \Lambda}$, i.e., $w_\lambda \geq w_0 >0$, we define $\psi(u) = \| u\|_{\ell_{1,w}(\Lambda)}:= \sum_{\lambda \in \Lambda} w_\lambda | u_\lambda|$. The space $\mathcal H^\psi$ simply coincides with $\ell_{1,w}(\Lambda)$. Observe that bounded sets in $\mathcal H^\psi$ are sequentially weakly compact in $\mathcal H$ and that, by Fatou's lemma, $\psi$ is lower-semicontinuous with respect to both strong and weak topologies of $\mathcal H$. 

3. Let $\mathcal H = \mathbb R^N$ endowed with the Euclidean norm, and $Q:\mathbb R^N \to \mathbb R^n$, for $n\leq N$, is a fixed linear operator. We define $\psi(u) = \| Q u \|_{\ell_1^n}$. Clearly $\mathcal H^\psi = \mathbb R^N$ and all the requested properties are trivially fulfilled. One particular example of this finite dimensional situation is associated with the choice of $Q:\mathbb R^N \to \mathbb R^{N-1}$ given by $Q(u)_i := N(u_{i+1} - u_i)$, $i=0,\dots,N-2$. In this case $\psi(u) = \| Q u \|_{\ell_1^{N-1}}$ is the discrete variation of the vector $u$ and the model can be seen as a discrete approximation to the situation encountered in the first example, by discrete sampling and finite differences, i.e., setting $u_i := u(\frac{i}{N})$ and $u \in BV(0,1)$.
\end{examples}

\subsection{Bounded subspace decompositions}
\label{projbnd}
In the following we will consider orthogonal decompositions of $\mathcal H$ into closed subspaces. We will also require that such a splitting is bounded in $\mathcal H^\psi$.
\\
Assume that $V_1,V_2$ are two closed, mutually orthogonal, and complementary subspaces of $\mathcal H$, i.e., $\mathcal H = V_1 \oplus V_2$, and $\pi_{V_i}$ are the corresponding orthogonal projections, for $i=1,2$. Moreover we require the mapping property 
$$
\pi_{V_i}|_{\mathcal H^\psi}:\mathcal H^\psi \to V_i^\psi:= \mathcal H^\psi \cap V_i, \quad i=1,2,
$$
continuously in the norm of $\mathcal H^\psi$, and $\Range(\pi_{V_i}|_{\mathcal H^\psi})=V_i^\psi$ is closed. This implies that $\mathcal H^\psi$ splits into the direct sum $\mathcal H^\psi = V_1^\psi \oplus  V_2^\psi$. 
\begin{examples}
\label{ex2}
1. Let $\Omega_1 \subset \Omega \subset \mathbb R^d$, for $d=1,2$, be two bounded open sets with Lipschitz boundaries, and $\Omega_2 = \Omega \setminus \Omega_1$. We define 
$$
V_i :=\{ u \in L^2(\Omega) : \supp(u) \subset \Omega_i \}, \quad i=1,2.
$$
Then $\pi_{V_i} (u ) = u 1_{\Omega_i}$.
For $\psi(u)=V(u,\Omega)$, by \cite[Corollary 3.89]{AFP}, $V_i^\psi = BV(\Omega) \cap V_i$ is a closed subspace of $BV(\Omega)$ and $\pi_{V_i} (u )=u 1_{\Omega_i} \in V_i^\psi$, $i=1,2$, for all $u \in BV(\Omega)$. 
\\
2. Similar decompositions can be considered for the examples where $\mathcal H = \ell_2(\Lambda)$ and $\psi (u) = \|u\|_{\ell_{1,w}}$, see, e.g., \cite{fo07}, and $\mathcal H = \mathbb R^N$ and $\psi (u) = \|Q u\|_{\ell_{1}^n}$.

\end{examples}

\section{A Convex Variational Problem and Subspace Splitting}

We are interested in the minimization in $\mathcal H$ (actually in $\mathcal H^\psi$) of the functional
$$
\mathcal J(u) := \| T u - g \|_{\mathcal H}^2 + 2 \alpha \psi(u),
$$
where $T \in \mathcal L(\mathcal H)$ is a bounded linear operator, $g \in \mathcal H$ is a datum, and $\alpha>0$ is a fixed constant.
 In order to guarantee the existence of its minimizers we assume that:
\begin{itemize}
\item[(C)] $\mathcal J$ is coercive in $\mathcal H$, i.e., $\{ \mathcal J \leq C\} := \{ u \in \mathcal H :  \mathcal J(u) \leq C\}$ is bounded in $\mathcal H$.
\end{itemize}
\begin{examples}
\label{exa3}
1. Assume $\Omega \subset \mathbb R^d$, for $d=1,2$ be a bounded open set with Lipschitz boundary, $\mathcal H = L^2 (\Omega)$ and $\psi(u) = V(u,\Omega)$ (compare Examples \ref{ex1}.1).
In this case we deal with total variation minimization. It is well-known that if $T 1_{\Omega} \neq 0$ then condition (C) is indeed satisfied, see \cite[Proposition 3.1]{Ve01} and \cite{ChL}.

2.  Let $\Lambda$ be a countable index set and $\mathcal H = \ell_2(\Lambda)$. For a strictly positive sequence $w = (w_\lambda)_{\lambda \in \Lambda}$, i.e., $w_\lambda \geq w_0 >0$, we define $\psi(u) = \| u\|_{\ell_{1,w}(\Lambda)}:= \sum_{\lambda \in \Lambda} w_\lambda | u_\lambda|$ (compare with Examples \ref{ex1}.2). In this case condition (C) is trivially satisfied since $\mathcal J(u) \geq 2 \alpha \psi(u) =   2 \alpha \| u\|_{\ell_{1,w}(\Lambda)} \geq \gamma \| u\|_{\ell_2(\Lambda)}$, for $\gamma = 2 \alpha w_0>0$.
\end{examples}

\begin{lemma}
\label{exmin}
Under the assumptions above, $\mathcal J$ has minimizers in $\mathcal H^\psi$.
\end{lemma}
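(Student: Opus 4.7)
The plan is to apply the direct method of the calculus of variations, working in the weak topology of $\mathcal H$ rather than through the auxiliary topology $\tau^\psi$, since $\mathcal H$ is reflexive and this avoids having to transport lower semicontinuity of the fidelity term between topologies on two different spaces.

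First I would note that $0 \in \mathcal H^\psi$ and $\mathcal J(0) = \|g\|_{\mathcal H}^2 < \infty$, so the infimum $m := \inf_{u\in\mathcal H^\psi} \mathcal J(u)$ is finite. Take a minimizing sequence $(u_n) \subset \mathcal H^\psi$ with $\mathcal J(u_n) \to m$. Coercivity (C) yields boundedness of $(u_n)$ in $\mathcal H$, and from $\mathcal J(u_n)$ bounded I read off the uniform bound $\psi(u_n) \leq \mathcal J(u_n)/(2\alpha)$. Reflexivity of $\mathcal H$ then lets me extract a subsequence (not relabeled) with $u_n \rightharpoonup u^*$ weakly in $\mathcal H$.

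The heart of the argument is that both terms of $\mathcal J$ are weakly lower semicontinuous on $\mathcal H$. For the fidelity term this is standard: $T$ bounded linear is weak-to-weak continuous, and $v \mapsto \|v-g\|_{\mathcal H}^2$ is convex and norm-continuous, hence weakly lsc. For $\psi$, properties $(\Psi 2)$ and $(\Psi 3)$ together give convexity, since for $t \in [0,1]$
\[
\psi(tu + (1-t)v) \leq \psi(tu) + \psi((1-t)v) = t\psi(u) + (1-t)\psi(v).
\]
A convex, strongly lower semicontinuous function on a Banach space is weakly lower semicontinuous (the sublevel sets are convex and strongly closed, hence weakly closed by Mazur's lemma), so combining convexity with $(\Psi 4)$ upgrades the strong lsc of $\psi$ to weak lsc on $\mathcal H$. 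Consequently $\psi(u^*) \leq \liminf_n \psi(u_n) < \infty$, which by assumption $(H3)$ places $u^* \in \mathcal H^\psi$, and summing the two lower semicontinuity inequalities gives $\mathcal J(u^*) \leq \liminf_n \mathcal J(u_n) = m$, so $u^*$ is a minimizer.

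The only delicate step is passing from strong lower semicontinuity $(\Psi 4)$ to weak lower semicontinuity of $\psi$, but once convexity is secured from $(\Psi 2)$--$(\Psi 3)$ this is a routine application of Mazur's lemma. I note in passing that conditions $(H1)$--$(H2)$ are not needed for this existence result; they would offer an alternative route via a $\tau^\psi$-convergent subsequence inside $\mathcal H^\psi$, but working weakly in $\mathcal H$ sidesteps the need to know how $T$ interacts with $\tau^\psi$ and does not require the reflexivity of $\mathcal H^\psi$, which the authors explicitly do not assume.
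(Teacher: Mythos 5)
Your proof is correct, but it follows a genuinely different route from the paper's. The paper runs the direct method through the auxiliary topology: from (C) the minimizing sequence is bounded in $\mathcal H^\psi$, then (H1) gives a $\tau^\psi$-convergent subsequence (plus a further weakly $\mathcal H$-convergent one), and lower semicontinuity is split between the two topologies --- weak-$\mathcal H$ lsc for the fidelity term and (H2) for $\psi$. You instead stay entirely in the weak topology of $\mathcal H$: convexity of $\psi$ (correctly extracted from $(\Psi 2)$--$(\Psi 3)$) together with the strong lower semicontinuity $(\Psi 4)$ upgrades, via closed convex sublevel sets and Mazur, to weak lsc of $\psi$ on $\mathcal H$, and then (H3) places the weak limit in $\mathcal H^\psi$. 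Both arguments are sound under the paper's standing assumptions. What your route buys: it needs neither (H1) nor (H2), avoids the (implicit in the paper's proof) identification of the $\tau^\psi$-limit with the weak-$\mathcal H$-limit of the two extracted subsequences, and does not touch the question of how $T$ behaves along $\tau^\psi$. What the paper's route buys: it does not rely on convexity of $\psi$ or on its strong lsc in $\mathcal H$, only on lsc along $\tau^\psi$, so it is the argument that survives if one weakens $(\Psi 2)$--$(\Psi 4)$; moreover the $\tau^\psi$-compactness machinery (weak-$*$ convergence in $BV$, weak convergence in $\ell_1$-type settings) is set up anyway because it is reused later in the paper, so the existence proof comes essentially for free from that framework. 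One small caveat on your remark that (H1)--(H2) are ``not needed'': that is true under (H3) and $(\Psi 4)$ as stated, but note your argument leans on (H3) where the paper's does not, so the two proofs trade assumptions rather than yours strictly using fewer.
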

\begin{proof} The proof is a standard application of the direct method of calculus of variations.
Let $(u_n)_n \subset \mathcal H$, a minimizing sequence. By assumption (C) we have $\| u_n\|_{\mathcal H} + \psi(u_n) \leq
 C$ for all $n \in \mathbb N$. Therefore by (H1) we can extract a subsequence in $\mathcal H^\psi$ converging in the topology $\tau^\psi$. Possibly passing to a further subsequence we can assume that it also converges weakly in $\mathcal H$. By lower-semicontinuity of $\| T u - g \|_{\mathcal H}^2$ with respect to the weak topology of $\mathcal H$ and the lower-semicontinuity of $\psi$ with respect to the topology $\tau^\psi$, ensured by assumption (H2), we have the wanted existence of minimizers.
\end{proof}

The minimization of $\mathcal J$ is a classical problem \cite{ET} which was recently re-considered by several authors, \cite{Ch,CW,dadede04,dateve06,SCD, ti96},  with emphasis on the computability of minimizers  in particular cases. They studied essentially the same algorithm for the minimization. \\
For $\psi$ with properties $(\Psi1-\Psi4)$, there exists a closed convex set $K_\psi \subset \mathcal H$ such that
\begin{eqnarray*}
\psi^*(u) &=& \sup_{v \in \mathcal H} \{ \langle v,u \rangle - \psi(v) \}\\
&=& \chi_{K_\psi}(u) = \left \{ \begin{array}{ll}
0 & \mbox{ if } u \in K_\psi\\
+\infty & \mbox{ otherwise}.
\end{array}
\right .
\end{eqnarray*}
See also Examples \ref{ex3}.2 below.
In the following we assume furthermore that $K_\psi = - K_\psi$. For any closed convex set $K \subset \mathcal H$ we denote $P_K(u)= \argmin_{v \in K} \|u-v\|_{\mathcal H}$ the orthogonal projection onto $K$. For $\mathbb S_\alpha^\psi := I - P_{\alpha K_\psi}$, called the {\it generalized thresholding map} in the signal processing literature, the iteration
\begin{equation}
\label{eq1}
u^{(n+1)} = \mathbb S_\alpha^\psi ( u^{(n)} + T^* ( g - T u^{(n)}))
\end{equation}
converges weakly to a minimizer $u \in \mathcal H^\psi$ of $\mathcal J$, for any initial choice $u^{(0)} \in \mathcal H^\psi$, provided $T$ and $g$ are suitably rescaled so that $\|T\|<1$.
For particular situations, e.g., $\mathcal H = \ell_2(\Lambda)$ and $\psi (u) = \|u\|_{\ell_{1,w}}$, one can prove the convergence in norm \cite{dadede04,dateve06}.
\\

As it is pointed out, for example in \cite{dafolo07,fo07}, this algorithm converges with a poor rate, unless $T$ is non-singular or has special additional spectral properties. For this reason accelerations by means of projected steepest descent iterations \cite{dafolo07} and domain decomposition methods \cite{fo07} were proposed. \\

The particular situation considered in  \cite{fo07} is $\mathcal H = \ell_2(\Lambda)$ and $\psi (u) = \|u\|_{\ell_{1}(\Lambda)}$. In this case one takes advantage of the fact that for a disjoint partition of the index set $\Lambda = \Lambda_1 \cup \Lambda_2$ we have the splitting $\psi(u_{\Lambda_1} + u_{\Lambda_2}) = \psi(u_{\Lambda_1})+  \psi(u_{\Lambda_2})$ for any vector $u_{\Lambda_i}$ supported on $\Lambda_i$, $i=1,2$. Thus, a decomposition into column subspaces (i.e., componentwise) of the operator $T$ (if identified with a suitable matrix) is realized, and alternating minimizations on these subspaces are performed by means of iterations of the type (\ref{eq1}). This leads, e.g., to the following sequential algorithm:
 Pick an initial $u_{\Lambda_1}^{(0,L)}+ u_{\Lambda_2}^{(0,M)} : = u^{(0)} \in \ell_1(\Lambda)$, for example $u^{(0)}=0$, and iterate
\begin{equation}
\label{schw_sp:it}
\left \{ 
\begin{array}{ll}
\left \{ 
\begin{array}{ll}
u_{\Lambda_1}^{(n+1,0)} =  u_{\Lambda_1}^{(n,L)}&\\
u_{\Lambda_1}^{(n+1,\ell+1)} =  \mathbb{S}_\alpha \left ( u_{\Lambda_1}^{(n+1,\ell)} +  T_{\Lambda_1}^*((g -  T_{\Lambda_2}u_{\Lambda_2}^{(n,M)})-  T_{\Lambda_1}  u_{\Lambda_1}^{(n+1,\ell)}) \right )& \ell=0,\dots, L-1\\
\end{array}\right. &\\
\left \{ 
\begin{array}{ll}
u_{\Lambda_2}^{(n+1,0)} =  u_{\Lambda_2}^{(n,M)}&\\
u_{\Lambda_2}^{(n+1,\ell+1)} = \mathbb{S}_\alpha \left ( u_{\Lambda_2}^{(n+1,\ell)} + T_{\Lambda_2}^*((g -  T_{\Lambda_1}u_{\Lambda_1}^{(n+1,L)})-  T_{\Lambda_2}  u_{\Lambda_2}^{(n+1,\ell)}) \right )&\ell=0,\dots, M -1\\
\end{array}\right. &\\
u^{(n+1)}:=u_{\Lambda_1}^{(n+1,L)} + u_{\Lambda_2}^{(n+1,M)}.
\end{array}
\right.
\end{equation} 
Here the operator $ \mathbb{S}_\alpha $ is the soft-thresholding operator which acts componentwise $\mathbb{S}_\alpha v = (S_\alpha v_\lambda)_{\lambda \in \Lambda}$ and defined by
\begin{equation}
\label{softthr}
S_\alpha (x ) = \left \{
\begin{array}{ll}
x -\sgn(x)\alpha, & |x| > \alpha\\
0, & \mbox{ otherwise}.
\end{array}
\right.
\end{equation}
The expected benefit from this approach
is twofold:
\begin{itemize}
\item[1.] Instead of solving one large problem with many iteration steps, we can solve approximatively
several smaller subproblems, which might lead to an acceleration of convergence and a reduction of the overall
computational effort, due to possible conditioning improvements;
\item[2.] The subproblems do not need more sophisticated algorithms, simply reproduce at smaller dimension the original problem, and they can be solved in parallel.
\end{itemize}

The nice splitting of $\psi$ as a sum of evaluations on subspaces does not occur, for instance, when $\mathcal H = L^2(\Omega)$, $\psi(u)=V(u,\Omega)=|D u|(\Omega)$, and $\Omega_1 \cup \Omega_2\subset \Omega \subset \bar \Omega_1 \cup \bar \Omega_2$ is a disjoint decomposition of $\Omega$. Indeed, cf. \cite[Theorem 3.84]{AFP}, we have
\begin{equation}
\label{amb}
|D(u_{\Omega_1} + u_{\Omega_2})|(\Omega) = |D u_{\Omega_1}|(\Omega_1) + |D u_{\Omega_2}|(\Omega_2)+\underbrace{\int_{\partial \Omega_1 \cap \partial \Omega_2} |  u_{\Omega_1}^+(x) - u_{\Omega_2}^-(x)| d \mathcal H_1(x)}_{\mbox{additional interface term}}.
\end{equation}
Here one should not confuse $ \mathcal H_d$ with any $ \mathcal H^\psi$ since the former indicates the Hausdorff measure of dimension $d$. 
The symbols $v^+$ and $v^-$ denote the left and right approximated limits at jump points \cite[Proposition 3.69]{AFP}.
The presence of the additional boundary interface term $\int_{\partial \Omega_1 \cap \partial \Omega_2} |  u_{\Omega_1}^+(x) - u_{\Omega_2}^-(x)| d \mathcal H_1(x)$ does not allow to use in a straightforward way iterations as in (\ref{eq1}) to minimize the local problems on $\Omega_i$. 
\\
Moreover, also in the sequence space setting mentioned above, the hope for a better conditioning by column subspace splitting as in \cite{fo07} might be ill-posed, no such splitting needs to be well conditioned in general (good cases are provided in \cite{tr07} instead).\\

Therefore, one may want to consider arbitrary subspace decompositions and, in order to deal with these more general situations, we investigate splittings into arbitrary orthogonal subspaces $\mathcal H = V_1 \oplus V_2$ for which we may have
$$
\psi(\pi_{V_1}(u) + \pi_{V_2}(v)) \neq \psi(\pi_{V_1}(u)) + \psi(\pi_{V_2}(v)).
$$
In principal, in this paper we limit ourself to consider the detailed analysis for two subspaces $V_1, V_2$. Nevertheless, the arguments can be easily generalized to multiple subspaces $V_1, \dots, V_{\mathcal N}$, see, e.g., \cite{fo07}, and in the numerical experiments we will also test this more general situation.\\

With this splitting we want to minimize $\mathcal J$ by suitable instances of the following alternating algorithm:  Pick an initial $V_1\oplus V_2 \ni  u_1^{(0)}+ u_2^{(0)} : = u^{(0)} \in \mathcal H^\Psi$, for example $u^{(0)}=0$, and iterate
\begin{equation}
\label{schw_sp}
\left \{ 
\begin{array}{ll}
u_1^{(n+1)} \approx \argmin_{v_1 \in V_1}  \mathcal  J(v_1 +u_2^{(n)}) &\\
u_2^{(n+1)} \approx  \argmin_{v_2 \in V_2} \mathcal J(u_1^{(n+1)} + v_2) &\\
u^{(n+1)}:=u_1^{(n+1)} + u_2^{(n+1)}.
\end{array}
\right.
\end{equation}
We use ``$\approx$'' (the approximation symbol) because in practice we never perform the exact minimization, as it occurred in (\ref{schw_sp:it}). In the following section we discuss how to realize the approximation to the individual subspace minimizations. As pointed out above, this cannot just reduce to a simple iteration of the type (\ref{eq1}).

\section{Local Minimization by Lagrange Multipliers}

Let us consider, for example,
\begin{equation}
\label{m1}
\argmin_{v_1 \in V_1}  \mathcal  J(v_1 +u_2) = \argmin_{v_1 \in V_1} \| T v_1 - (g - T u_2) \|^2_{\mathcal H} + 2 \alpha \psi(v_1 + u_2).
\end{equation}
First of all, observe that $\{u \in \mathcal H: \pi_{V_2} u = u_2 ,\mathcal J(u) \leq C\} \subset  \{\mathcal J \leq C\}$, hence the former set is also bounded by assumption (C). By the same argument as in Lemma \ref{exmin}, the minimization (\ref{m1}) has solutions. It is useful to us to introduce an auxiliary functional $\mathcal J^s_1$, called the {\it surrogate functional} of $\mathcal J$: Assume $a,u_1 \in V_1$ and $u_2 \in V_2$ and define
\begin{equation}
\label{surrfunc}
\mathcal J^s_1(u_1+ u_2, a) := \mathcal J(u_1+ u_2)+ \| u_1 -a\|_{\mathcal H}^2 - \| T(u_1 -a)\|_{\mathcal H}^2.
\end{equation}
A straightforward computation shows that
$$
\mathcal J^s_1(u_1+ u_2, a) = \| u_1 - (a + \pi_{V_1}T^*( g - T u_2 - T a))\|_{\mathcal H}^2 + 2 \alpha \psi(u_1 + u_2) + \Phi(a,g,u_2),
$$
where $\Phi$ is a function of $a,g,u_2$ only. We want to realize an approximate solution to (\ref{m1}) by using the following algorithm: For $u_1^{(0)} \in V_1^\psi$,
\begin{equation}
\label{m2}
u_1^{(\ell+1)} = \argmin_{u_1 \in V_1}  \mathcal  J^s_1(u_1 +u_2, u_1^{(\ell)}), \quad \ell \geq 0.
\end{equation}
Before proving the convergence of this algorithm, we need to investigate first how to compute practically $u_1^{(n+1)}$ for $u_1^{(n)}$ given. To this end we need to introduce further notions and to recall some useful results.
\subsection{Generalized Lagrange multipliers for nonsmooth objective functions}
Let us begin this subsection with the notion of a subdifferential.
\begin{definition}
\label{subdiff}
For a locally convex space $V$ and for a convex function $F:V \to \mathbb R \cup \{-\infty,+\infty\}$, we define the \emph{subdifferential} of $F$ at $x \in V$, as $\partial F(x) = \emptyset$ if $F(x) = \infty$, otherwise
$$
\partial F(x) := \partial F_V(x)  :=\{x^* \in V' : \langle x^*,y-x\rangle + F(x) \leq F(y) \quad \forall y \in V\},
$$
where $V'$ denotes the dual space of $V$. It is obvious from this
definition that $0 \in \partial F(x)$ if and only if $x$ is a minimizer
of $F$. 
Since we deal with several spaces, namely, $\mathcal H,\mathcal H^\psi, V_i, V_i^\psi$, it will turn out to be useful to distinguish sometimes in which space (and associated topology) the subdifferential is defined by imposing a subscript $\partial_V F$ for the subdifferential considered on the space $V$.
\end{definition}

\begin{examples}
\label{ex3}
1. Let $V=\ell_1(\Lambda)$ and $F(x) := \|x\|_1$ is the $\ell_1-$norm. We have
\begin{equation}\label{subdiff_l1}
\partial \|\cdot\|_1(x) \,=\, \{ \xi \in \ell_\infty(\Lambda):~ \xi_\lambda \in \partial |\cdot|(x_\lambda), \lambda \in \Lambda\}
\end{equation}
where $\partial |\cdot|(z) = \{\sgn(z)\}$ if $z \neq 0$ and $\partial |\cdot|(0) = [-1,1]$.

2. Assume $V= \mathcal H$ and $\varphi \geq 0$ is a proper lower-semicontinuous convex function. For $F(u;z) = \|u-z\|_{\mathcal{H}}^2 + 2 \varphi(u)$, we define the function
$$
\prox_\varphi(z) := \argmin_{u \in V} F(u;z),
$$
which is called the \emph{proximity map} in the convex analysis literature, e.g., \cite{ET,CW}, and \emph{generalized thresholding} in the signal processing literature, e.g., \cite{dadede04,dafolo07,dateve06,fo07}.  
Observe that by $\varphi \geq 0$ the function $F$ is coercive in $\mathcal H$ and by lower-semicontinuity and strict convexity of the term $\|u-z\|_{\mathcal{H}}^2$ this definition is well-posed. 
In particular, $\prox_\varphi(z)$ is the unique solution of the following differential inclusion
$$
0 \in (u-z) + \partial \varphi(u).
$$
It is well-known \cite{ET,rowe98} that the proximity map is nonexpansive, i.e.,
$$
\| \prox_\varphi(z_1) - \prox_\varphi(z_2)\|_{\mathcal H} \leq \| z_1 - z_2\|_{\mathcal H}.
$$ 
In particular, if $\varphi$ is a 1-homogeneous function then
$$
\prox_\varphi(z) = (I-P_{K_\varphi})(z),
$$
where $K_\varphi$ is a suitable closed convex set associated to $\varphi$, see for instance \cite{CW}.
\end{examples}\\ 
Under the notations of Definition \ref{subdiff}, we consider the following problem
\begin{equation}
\label{pbbp}
\argmin_{x \in V} \{ F(x): G(x)=0\},
\end{equation}
where $ G:V \to \mathbb R$ is a bounded linear operator on $V$. We have the following useful result.

\begin{theorem}[Generalized Lagrange multipliers for nonsmooth objective functions, Theorem 1.8, \cite{bapr}]
\label{bpth}
If $F$ is continuous in a point of $\ker G$ and $G$ has closed range in $V$, then a point $x_0 \in \ker G$ is an optimal solution of (\ref{pbbp}) if and only if 
$$
\partial F(x_0) \cap \Range G^* \neq \emptyset.
$$
\end{theorem}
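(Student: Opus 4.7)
The plan is to reduce the constrained problem to an unconstrained one and then apply a subdifferential calculus rule. Observe that the constrained problem $\min\{F(x) : G(x)=0\}$ is equivalent to minimizing the extended-real valued convex function $\widetilde F := F + \chi_{\ker G}$ over all of $V$, where $\chi_{\ker G}$ is the indicator of $\ker G$. Hence $x_0 \in \ker G$ is optimal if and only if
\[
0 \in \partial \widetilde F(x_0) = \partial(F + \chi_{\ker G})(x_0).
\]
This reformulation is the conceptual starting point, since both sides of the desired equivalence will be read off this inclusion.

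Next I would invoke the Moreau--Rockafellar sum rule for subdifferentials: under a suitable qualification condition, $\partial(F + \chi_{\ker G})(x_0) = \partial F(x_0) + \partial \chi_{\ker G}(x_0)$. Here the hypothesis that $F$ is continuous at some point of $\ker G = \mathrm{dom}\,\chi_{\ker G}$ is exactly the constraint qualification which makes the sum rule applicable (it guarantees that the relative interiors/continuity requirements for the Fenchel--Rockafellar additivity hold). Then I would compute the subdifferential of the indicator: $\partial\chi_{\ker G}(x_0)$ is the normal cone of the subspace $\ker G$ at $x_0$, which is simply the annihilator $(\ker G)^{\perp} \subset V'$.

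The final ingredient is to identify the annihilator with $\Range G^*$. In general, duality only gives $(\ker G)^\perp = \overline{\Range G^*}$ (weak-$*$ closure). Here the closed range assumption on $G$ is decisive: by the Banach closed range theorem, $G$ has closed range in its codomain if and only if $G^*$ has weak-$*$-closed range in $V'$, and in this situation one has the clean identity $(\ker G)^\perp = \Range G^*$. Substituting back, optimality of $x_0$ becomes
\[
0 \in \partial F(x_0) + \Range G^*,
\]
which, because $\Range G^*$ is a linear subspace (hence invariant under sign change), is exactly the non-emptiness of $\partial F(x_0) \cap \Range G^*$. This yields both directions of the stated equivalence.

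The main obstacle I anticipate is the sum rule step: in infinite-dimensional Banach spaces, summation of subdifferentials fails in general, and one needs a qualification condition to validate it. This is precisely where the two hypotheses of the theorem pay off — continuity of $F$ at a point of $\ker G$ secures the sum rule, and closed range of $G$ upgrades $\overline{\Range G^*}$ to $\Range G^*$ so that the multiplier $\lambda \in V'$ with $-G^*\lambda \in \partial F(x_0)$ (equivalently $G^*\lambda \in \partial F(x_0)$) genuinely exists rather than being merely approximable.
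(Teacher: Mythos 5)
Your argument is correct. Note first that the paper does not prove this statement at all: it is quoted verbatim as Theorem 1.8 of the cited book of Barbu and Precupanu and used as a black box, so there is no internal proof to compare against. Your derivation is the standard self-contained one and it is sound: the reformulation $x_0$ optimal $\Leftrightarrow 0\in\partial(F+\chi_{\ker G})(x_0)$ is immediate; the continuity of $F$ at a point of $\ker G=\mathrm{dom}\,\chi_{\ker G}$ is exactly the qualification needed for the Moreau--Rockafellar identity $\partial(F+\chi_{\ker G})(x_0)=\partial F(x_0)+\partial\chi_{\ker G}(x_0)$; the normal cone of the closed subspace $\ker G$ is its annihilator $(\ker G)^{\perp}$; and the closed range theorem turns the general identity $(\ker G)^{\perp}=\overline{\Range G^*}$ into $(\ker G)^{\perp}=\Range G^*$, after which symmetry of the subspace $\Range G^*$ under sign change gives the stated intersection condition. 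Two small remarks: for the ``if'' direction the sum rule is not needed, since the inclusion $\partial F(x_0)+\partial\chi_{\ker G}(x_0)\subset\partial(F+\chi_{\ker G})(x_0)$ always holds, so the qualification hypothesis is really only consumed by the ``only if'' direction; and your appeal to the closed range theorem implicitly places the result in a Banach-space setting (for $G$ mapping into a Banach space), which is consistent with how the paper actually applies it, namely with $V=\mathcal H^\psi$ and $G=\pi_{V_2}|_{\mathcal H^\psi}$ assumed bounded with closed range.
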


\subsection{Oblique thresholding}

We want to exploit Theorem \ref{bpth} in order to produce an algorithmic solution to each iteration step (\ref{m2}).
\begin{theorem}[Oblique thresholding]
\label{main1}
For $u_2 \in V_2^\psi$ and for $z \in V_1$ the following statements are equivalent:
\begin{itemize}
\item[(i)] $u_1^* = \argmin_{u \in V_1} \| u -z\|_{\mathcal{H}}^2 + 2 \alpha \psi(u+u_2)$;
\item[(ii)] there exists $\eta \in \Range (\pi_{V_2}|_{\mathcal H^\psi})^* \simeq (V_2^\psi)'$ such that
$0 \in u^*_1 -(z- \eta) + \alpha \partial_{\mathcal H^\psi} \psi(u_1^* + u_2)$.
\end{itemize}
Moreover, the following statements are equivalent and imply (i) and (ii).
\begin{itemize}
\item[(iii)] there exists $\eta \in V_2$ such that $u_1^* = (I- P_{\alpha K_\psi})(z+ u_2 -\eta)-u_2=\mathbb S_\alpha^\psi(z+ u_2 -\eta) -u_2 \in V_1$;
\item[(iv)] there exists $\eta \in V_2$ such that
$\eta = \pi_{V_2} P_{\alpha K_\psi} (\eta - (z+ u_2))$.
\end{itemize}
\end{theorem}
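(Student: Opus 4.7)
My plan is to split the theorem into three pieces: (i) $\Leftrightarrow$ (ii), (iii) $\Leftrightarrow$ (iv), and finally (iii) $\Rightarrow$ (ii).

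For (i) $\Leftrightarrow$ (ii) I would reformulate the subspace minimization as a constrained minimization over the full space $\mathcal{H}^\psi$: minimize $F(u) := \|u-z\|_{\mathcal H}^2 + 2\alpha\,\psi(u+u_2)$ on $\mathcal{H}^\psi$ subject to $G(u):=\pi_{V_2}|_{\mathcal H^\psi}u=0$, so that $\ker G = V_1^\psi$. The hypotheses of Theorem \ref{bpth} are met: $G$ has closed range $V_2^\psi$ by the assumptions in Section \ref{projbnd}, and $F$ is continuous on $\mathcal H^\psi$ because the quadratic term is continuous in $\|\cdot\|_{\mathcal H}$ and $\psi$ is continuous as a seminorm in the graph norm $\|\cdot\|_{\mathcal H^\psi}$. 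The sum rule then gives $\partial_{\mathcal H^\psi} F(u_1^*) = 2(u_1^*-z)+2\alpha\,\partial_{\mathcal H^\psi}\psi(u_1^*+u_2)$, and Theorem \ref{bpth} yields optimality iff some $\tilde\eta\in\Range G^*$ lies in this set. After rescaling, this is exactly (ii), with $\Range G^*\simeq (V_2^\psi)'$ via the standard identification.

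For (iii) $\Leftrightarrow$ (iv) I would argue by direct algebra, exploiting the splitting $\mathcal H=V_1\oplus V_2$, the assumption $K_\psi=-K_\psi$ (so $P_{\alpha K_\psi}(-x)=-P_{\alpha K_\psi}(x)$), and the identity $\mathbb S_\alpha^\psi = I-P_{\alpha K_\psi}$. The condition ``$u_1^* = \mathbb{S}_\alpha^\psi(z+u_2-\eta)-u_2 \in V_1$'' is equivalent to $\pi_{V_2}\mathbb S_\alpha^\psi(z+u_2-\eta)=u_2$; expanding $\mathbb S_\alpha^\psi = I-P_{\alpha K_\psi}$ and using $\pi_{V_2}z=0$, $\pi_{V_2}u_2=u_2$, $\pi_{V_2}\eta=\eta$ collapses this to $-\eta = \pi_{V_2}P_{\alpha K_\psi}(z+u_2-\eta)$, and symmetry of $K_\psi$ flips the sign inside to produce the fixed-point equation in (iv).

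For (iii) $\Rightarrow$ (ii) I would invoke Examples \ref{ex3}.2: $\mathbb S_\alpha^\psi(w)=\prox_{\alpha\psi}(w)$ is characterized by $0\in \prox_{\alpha\psi}(w)-w+\alpha\partial_{\mathcal H}\psi(\prox_{\alpha\psi}(w))$. Applying this with $w=z+u_2-\eta$ and adding/subtracting $u_2$ rearranges to $0\in u_1^*-(z-\eta)+\alpha\partial_{\mathcal H}\psi(u_1^*+u_2)$. Because $\eta\in V_2\subset\mathcal H$, it acts on any $u\in\mathcal H^\psi$ via $\langle\eta,u\rangle_{\mathcal H}=\langle\eta,\pi_{V_2}u\rangle_{\mathcal H}$, hence $\eta=G^*\eta$ after identifying $V_2\subset(V_2^\psi)'$; this lands $\eta$ in $\Range G^*$ and upgrades the $\mathcal H$-subdifferential inclusion to the $\mathcal H^\psi$-subdifferential inclusion required by (ii).

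The step I expect to require most care is verifying the continuity hypothesis of Theorem \ref{bpth} and, more subtly, the identification in the last implication of the ``concrete'' $\eta\in V_2$ from (iii) with an ``abstract'' element of $\Range(\pi_{V_2}|_{\mathcal H^\psi})^*\subset(\mathcal H^\psi)'$; this is where the inclusion chain $\mathcal H^\psi\subset\mathcal H\simeq\mathcal H'\subset(\mathcal H^\psi)'$ stated right before (H1)--(H3) does the bookkeeping. Apart from that, everything reduces to rewriting proximity maps and exploiting $K_\psi=-K_\psi$.
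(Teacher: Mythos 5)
Your proposal is correct and follows essentially the same route as the paper: (i)$\Leftrightarrow$(ii) via the generalized Lagrange multiplier theorem applied to $G=\pi_{V_2}|_{\mathcal H^\psi}$ (continuity of $F$ in the $\mathcal H^\psi$-norm plus closed range), (iii)$\Leftrightarrow$(iv) by projecting the thresholding identity onto $V_2$ and using $K_\psi=-K_\psi$, and (iii)$\Rightarrow$(ii) via the proximity-map characterization of $I-P_{\alpha K_\psi}$ together with $V_2\subset (V_2^\psi)'\simeq\Range(\pi_{V_2}|_{\mathcal H^\psi})^*$. The only cosmetic deviation is that you upgrade the $\partial_{\mathcal H}$-inclusion to the $\partial_{\mathcal H^\psi}$-inclusion directly through the embedding $\mathcal H\subset(\mathcal H^\psi)'$, whereas the paper re-reads it as the same minimization over $\mathcal H^\psi$ using (H3); both are valid.
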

$\vspace{0.1cm}$
\begin{proof}
Let us show the equivalence between (i) and (ii). The problem in (i) can be reformulated as
$$
u_1^* = \argmin_{u \in \mathcal H^\psi} \{ F(u):=\| u -z\|_{\mathcal{H}}^2 + 2 \alpha \psi(u+u_2), \pi_{V_2}(u)=0\}.
$$
The latter is a special instance of (\ref{pbbp}).
Moreover, $F$ is continuous on $V_1^\psi \subset  V_1 =\ker \pi_{V_2}$ in the norm-topology of $\mathcal H^\psi$ (while in general it is not on $V_1$ with the norm topology of $\mathcal H$). Recall now that $\pi_{V_2}|_{\mathcal H^\psi}$ is assumed to be a bounded and surjective map with closed range in the norm-topology of $\mathcal H^\psi$ (see Section \ref{projbnd}). This means that $(\pi_{V_2}|_{\mathcal H^\psi})^*$ is injective and that $\Range (\pi_{V_2}|_{\mathcal H^\psi})^* \simeq (V_2^\psi)'$ is closed. Therefore, by an application of  Theorem \ref{bpth} the optimality of $u^*_1$ is equivalent to the existence of $\eta \in \Range (\pi_{V_2}|_{\mathcal H^\psi})^* \simeq (V_2^\psi)'$ such that 
$$
-\eta \in \partial_{\mathcal H^\psi} F(u_1^*).
$$
Due to the continuity of $\|u-z\|_{\mathcal H}^2$ in $\mathcal H^\psi$, we have, by \cite[Proposition 5.6]{ET}, that 
$$
\partial_{\mathcal H^\psi} F(u_1^*) = 2 (u_1^*-z) + 2 \alpha \partial_{\mathcal H^\psi} \psi(u_1^*+u_2).
$$
Thus, the optimality of $u^*_1$ is equivalent to
$$
0 \in u^*_1 -(z- \eta) + \alpha \partial_{\mathcal H^\psi} \psi(u_1^* + u_2).
$$
This concludes the equivalence of (i) and (ii). Let us show now that (iii) implies (ii). 
The condition in (iii) can be rewritten as
$$
\xi = (I- P_{\alpha K_\psi})(z+u_2 - \eta), \quad \xi =u_1^* + u_2. 
$$
Since $\psi \geq 0 $ is 1-homogeneous and lower-semincontinuous, by Examples \ref{ex3}.2, the latter is equivalent to
$$
0 \in \xi - (z+u_2 - \eta) + \alpha \partial_{\mathcal H} \psi(\xi)
$$
or, by (H3),
\begin{eqnarray*}
\xi &=& \argmin_{u \in \mathcal H} \| u - (z+u_2 - \eta)\|_{\mathcal{H}}^2 + 2 \alpha \psi(u)\\
&=& \argmin_{u \in \mathcal H^\psi} \| u - (z+u_2 - \eta)\|_{\mathcal{H}}^2 + 2 \alpha \psi(u)
\end{eqnarray*}
The latter optimal problem is equivalent to
$$
0 \in  \xi - (z+u_2 - \eta) + \alpha \partial_{\mathcal H^\psi} \psi(\xi) \mbox{ or } 0 \in u^*_1 -(z- \eta) + \alpha \partial_{\mathcal H^\psi} \psi(u_1^* + u_2).
$$
Since $V_2 \subset (V_2^\psi)' \simeq\Range (\pi_{V_2}|_{\mathcal H^\psi})^*$ we obtain that (iii) implies (ii). We prove now the equivalence between (iii) and (iv). We have
\begin{eqnarray*}
u_1^* &=& (I- P_{\alpha K_\psi})(z+ u_2 -\eta)-u_2 \in V_1\\
&=& z-\eta - P_{\alpha K_\psi}(z+ u_2 -\eta).
\end{eqnarray*}
By applying $\pi_{V_2}$ to both sides of the latter equality we get
$$
0 = -\eta - \pi_{V_2} P_{\alpha K_\psi}(z+ u_2 -\eta).
$$
By recalling that $K_\psi = - K_\psi$, we obtain the fixed point equation
\begin{equation}
\label{fixpt}
\eta =  \pi_{V_2} P_{\alpha K_\psi}(\eta- (z+ u_2)).
\end{equation}
Conversely, assume $\eta =  \pi_{V_2} P_{\alpha K_\psi}(\eta- (z+ u_2))$ for some $\eta \in V_2$. Then
\begin{eqnarray*}
(I- P_{\alpha K_\psi})(z+ u_2 -\eta) -u_2 &=&  z-\eta - P_{\alpha K_\psi}(z+ u_2 -\eta)\\
&=& z -  \pi_{V_2} P_{\alpha K_\psi}(\eta- (z+ u_2)) -  P_{\alpha K_\psi}(z+ u_2 -\eta)\\
&=& z - (I- \pi_{V_2})  P_{\alpha K_\psi}(z+ u_2 -\eta)\\
&=& z - \pi_{V_1} P_{\alpha K_\psi}(z+ u_2 -\eta) = u_1^* \in V_1.
\end{eqnarray*}
\end{proof}
\begin{remark}
\label{rem1}
1. Unfortunately in general we have $V_2 \subsetneq (V_2^\psi)'$ which excludes the complete equivalence of the previous conditions (i)-(iv).
For example, in the case $\mathcal H = \ell_2(\Lambda)$ and $\psi(u) =\|u\|_{\ell_1}$, $\Lambda = \Lambda_1 \cup \Lambda_2$, $V_i = \ell_2^{\Lambda_i}(\Lambda) :=\{u \in \ell_2(\Lambda): \supp(u) \subset \Lambda_i\}$, $i=1,2$, we have $V_2^\psi = \ell_1^{\Lambda_2}(\Lambda)=\{u \in \ell_1(\Lambda): \supp(u) \subset \Lambda_2\}$, hence, $V_2 \subsetneq (V_2^\psi)' \simeq \ell_\infty^{\Lambda_2}(\Lambda)=\{u \in \ell_\infty(\Lambda): \supp(u) \subset \Lambda_i\}$. It can well be that $\eta \in \ell_\infty^{\Lambda_2}(\Lambda) \setminus \ell_2^{\Lambda_2}(\Lambda)$. However, since $\psi(u_{\Lambda_1} + u_{\Lambda_2}) = \psi(u_{\Lambda_1})+  \psi(u_{\Lambda_2})$ in this case, we have $0 \in u_1^* - z +\alpha \partial \| \cdot \|_1(u_1^*)$ and therefore we may choose any $\eta$ in $\partial \| \cdot \|_1(u_2)$. Following \cite{fo07}, $u_2$ is assumed to be the result of soft-thresholded iterations, hence $u_2$ is a finitely supported vector. Therefore, by Examples \ref{ex3}.1, we can choose $\eta$ to be also a finitely supported vector, hence $\eta \in \ell_2^{\Lambda_2}(\Lambda)=V_2$. This means that the existence of $\eta \in V_2$ as in (iii) or (iv) of the previous theorem may occur also in those cases for which  $V_2 \subsetneq (V_2^\psi)'$. In general, we can only observe that $V_2$ is weakly-$*$-dense in $(V_2^\psi)'$.

2. For $\mathcal H$ with finite dimension -- which is the relevant case in numerical applications -- all the spaces are independent of the particular attached norm and coincide with their duals, hence all the statements (i)-(iv) of the previous theorem are equivalent in this case.
\end{remark}\\

A simple constructive test for the existence of  $\eta \in V_2$ as in (iii) or (iv) of the previous theorem is provided by the following iterative algorithm:
\begin{equation}
\label{fixptit}
\eta^{(0)} \in V_2, \quad \eta^{(m+1)} =  \pi_{V_2} P_{\alpha K_\psi}(\eta^{(m)}- (z+ u_2)), \quad m \geq 0.
\end{equation}
\begin{proposition}\label{fixptitpr}
The following statements are equivalent:
\begin{itemize}
\item[(i)] there exists $\eta \in V_2$ such that
$\eta = \pi_{V_2} P_{\alpha K_\psi} (\eta - (z+ u_2))$ (which is in turn the condition (iv) of Theorem \ref{main1})
\item[(ii)] the iteration (\ref{fixptit}) converges weakly to any $\eta \in V_2$ that satisfies (\ref{fixpt}).
\end{itemize} In particular, there are no fixed points of (\ref{fixpt}) if and only if $\|\eta^{(m)}\|_{\mathcal H} \to \infty$, for $m \to \infty$.
\end{proposition}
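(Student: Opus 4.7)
The plan is to regard the iteration as the Picard scheme of the self-map
$$T:V_2\to V_2,\qquad T\eta := \pi_{V_2}\,P_{\alpha K_\psi}\bigl(\eta - (z+u_2)\bigr),$$
so that (i) asserts $T$ has a fixed point in $V_2$ and (ii) asserts that $\eta^{(m+1)}=T\eta^{(m)}$ converges weakly to one. The first step is to show that $T$ is an averaged nonexpansive operator on $V_2$: the translation $\eta\mapsto\eta-(z+u_2)$ is an isometry, while $P_{\alpha K_\psi}$ and $\pi_{V_2}$ are firmly nonexpansive as orthogonal projections onto a closed convex set and a closed subspace respectively. Since the composition of two firmly nonexpansive operators is $\tfrac{2}{3}$-averaged, one may write $T=(1-\alpha)I+\alpha N$ for some nonexpansive $N:V_2\to V_2$ and some $\alpha\in(0,1)$.

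Once $T$ is identified as averaged, both equivalences reduce to the classical weak-convergence dichotomy for Picard iterates of averaged nonexpansive maps in a Hilbert space (Opial type): for every initial $\eta^{(0)}\in V_2$, either $(T^m\eta^{(0)})$ converges weakly to a fixed point of $T$, or $\|T^m\eta^{(0)}\|_{\mathcal H}\to\infty$. Then (ii)$\Rightarrow$(i) is immediate, while for (i)$\Rightarrow$(ii) the divergence alternative is excluded by Fej\'er monotonicity: given any fixed point $p$, the averaging inequality
$$\|T\eta-p\|_{\mathcal H}^2 \;\leq\; \|\eta-p\|_{\mathcal H}^2 - \tfrac{1-\alpha}{\alpha}\,\|T\eta-\eta\|_{\mathcal H}^2$$
yields $\|\eta^{(m+1)}-p\|_{\mathcal H}\leq\|\eta^{(m)}-p\|_{\mathcal H}$, so the sequence is bounded and therefore must converge weakly. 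The ``in particular'' clause is just the contrapositive of the dichotomy.

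The subtle direction, and the main obstacle I anticipate, is the converse of the ``in particular'' clause, namely manufacturing a fixed point from the sole assumption $\|\eta^{(m)}\|_{\mathcal H}\not\to\infty$ without an anchor $p$. Here one extracts a bounded subsequence, uses the general monotonicity $\|T^{m+1}\eta^{(0)}-T^m\eta^{(0)}\|_{\mathcal H}\leq\|T^m\eta^{(0)}-T^{m-1}\eta^{(0)}\|_{\mathcal H}$ valid for every nonexpansive $T$ together with the telescoped averaging identity above to conclude that bounded orbits of averaged maps are automatically asymptotically regular, and then applies Browder's demiclosedness principle to $I-N$ to promote any weak cluster point of the bounded subsequence to the required fixed point of $T$; a standard Opial argument excludes the possibility of two distinct weak cluster points and upgrades weak-subsequential to weak convergence of the whole sequence.
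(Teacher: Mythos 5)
Your handling of the equivalence of (i) and (ii) is sound and close in spirit to the paper's: you note that the iteration map is averaged (translation is an isometry, the two projections are firmly nonexpansive, their composition is $2/3$-averaged), get Fej\'er monotonicity and summability of $\|\eta^{(m+1)}-\eta^{(m)}\|^{2}$ from the anchored averaging inequality, and conclude by an Opial/demiclosedness argument. The paper instead observes that the map is \emph{strongly nonexpansive} (projections onto convex sets are strongly nonexpansive, and this class is closed under composition and under adding a constant) and cites the Bruck--Reich corollaries (Proposition \ref{bbl}) wholesale, which deliver the equivalence \emph{and} the final dichotomy in one stroke; averaged operators are strongly nonexpansive, so your class sits inside theirs.

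The genuine gap is in the converse of the ``in particular'' clause, i.e.\ in producing a fixed point from the sole assumption $\|\eta^{(m)}\|_{\mathcal H}\not\to\infty$, and you have in fact flagged the right spot. Two problems with your sketch. First, the ``telescoped averaging identity'' you invoke is anchored at a fixed point $p$, which is exactly what is not yet available; the anchor-free version, applied to the consecutive pair $(\eta^{(m)},\eta^{(m-1)})$, only yields that $\|\eta^{(m+1)}-\eta^{(m)}\|_{\mathcal H}$ is nonincreasing with some limit $r\ge 0$ and that $\sum_{m}\|(\eta^{(m+1)}-\eta^{(m)})-(\eta^{(m)}-\eta^{(m-1)})\|_{\mathcal H}^{2}<\infty$, which does not force $r=0$ (a pure translation is averaged and has $r>0$). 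Second, the hypothesis gives only a \emph{bounded subsequence}, not a bounded orbit, so Ishikawa's theorem (``bounded orbits of averaged maps are asymptotically regular'') cannot be applied as stated, and there is no elementary argument that a bounded subsequence forces the whole orbit to be bounded. What you need here is precisely the Bruck--Reich dichotomy for strongly nonexpansive maps: either the fixed point set is nonempty and every orbit converges weakly to a fixed point, or it is empty and $\|T^{m}\eta^{(0)}\|_{\mathcal H}\to\infty$ for every initial point. The paper obtains the whole proposition by citing this. If you want to keep the averaged-operator route, the sketch can be repaired by combining Pazy's theorem ($\eta^{(m)}/m$ converges strongly to $-v$, where $v$ is the least-norm element of $\overline{\mathrm{Range}(I-T)}$) with the Baillon--Bruck--Reich fact that for strongly nonexpansive (in particular averaged) maps $\|\eta^{(m+1)}-\eta^{(m)}\|_{\mathcal H}\to\|v\|_{\mathcal H}$: a bounded subsequence gives $v=0$, hence asymptotic regularity of the full orbit, and then demiclosedness along the bounded subsequence produces the fixed point. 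As written, however, the step ``bounded subsequence $\Rightarrow$ asymptotic regularity $\Rightarrow$ fixed point'' is not justified.
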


For the proof of this Proposition we need to recall some classical notions and results.
\begin{definition}
A nonexpansive map $T:\mathcal H \to \mathcal H$ is strongly nonexpansive if for $(u_n-v_n)_n$ bounded and $\| T(u_n) - T(v_n)\|_{\mathcal H} - \| u_n-v_n\|_{\mathcal H} \to 0$ we have
$$
u_n-v_n - T(u_n) - T(v_n) \to 0, \quad n \to \infty.
$$
\end{definition}
\begin{proposition}[Corollaries 1.3, 1.4, and 1.5 \cite{br}]
\label{bbl}
Let $T:\mathcal H \to \mathcal H$ be a strongly nonexpansive map. Then $\fix T =\{ u \in \mathcal H: T(u)=u\} \neq \emptyset$ if and only if $(T^n u)_n$ converges weakly to a fixed point $u_0 \in \fix T$ for any choice of $u \in \mathcal H$.
\end{proposition}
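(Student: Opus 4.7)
The plan is to prove the substantive direction, namely that $\fix T \neq \emptyset$ forces every orbit $(T^n u)_n$ to converge weakly to a fixed point; the converse is immediate, since any single weak cluster point provided by the conclusion already witnesses $\fix T \neq \emptyset$.

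Assume $\fix T \neq \emptyset$ and pick $p \in \fix T$. Since $T$ is (strongly) nonexpansive, it is in particular nonexpansive, so
$$
\|T^{n+1} u - p\|_{\mathcal H} = \|T(T^n u) - T p\|_{\mathcal H} \leq \|T^n u - p\|_{\mathcal H},
$$
and the sequence $(\|T^n u - p\|_{\mathcal H})_n$ is monotone nonincreasing, hence convergent. This Fej\'er-type monotonicity supplies two ingredients simultaneously: first, the orbit $(T^n u)_n$ is norm-bounded and therefore admits weak cluster points; second, setting $u_n := T^n u$ and $v_n := p$, the difference of consecutive norms
$$
\|T u_n - T v_n\|_{\mathcal H} - \|u_n - v_n\|_{\mathcal H} = \|T^{n+1} u - p\|_{\mathcal H} - \|T^n u - p\|_{\mathcal H}
$$
tends to zero while $(u_n - v_n)_n$ remains bounded. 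The hypothesis of strong nonexpansivity then applies and yields the asymptotic regularity statement
$$
T^n u - T^{n+1} u \;=\; (u_n - v_n) - (T u_n - T v_n) \longrightarrow 0 \quad \text{strongly in } \mathcal H.
$$

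Next I would extract a subsequence $T^{n_k} u \rightharpoonup u_0$ (possible by boundedness) and invoke Browder's demi-closedness principle for the nonexpansive operator $I - T$ on a Hilbert space: since $(I-T)(T^{n_k} u) \to 0$ strongly and $T^{n_k} u \rightharpoonup u_0$ weakly, one concludes $(I - T) u_0 = 0$, i.e., $u_0 \in \fix T$. Hence every weak cluster point of the orbit lies in $\fix T$. Uniqueness of the cluster point is then secured by Opial's lemma, which is available in every Hilbert space: if $u_0 \neq u_1$ were two weak cluster points in $\fix T$, then since $(\|T^n u - q\|_{\mathcal H})_n$ converges for each $q \in \fix T$ (by the monotonicity step applied with $p$ replaced by $u_0$ and by $u_1$), Opial's inequality applied along subsequences converging to $u_0$ and to $u_1$ respectively would give the contradictory strict inequalities
$$
\lim_n \|T^n u - u_0\|_{\mathcal H} < \lim_n \|T^n u - u_1\|_{\mathcal H} \quad\text{and}\quad \lim_n \|T^n u - u_1\|_{\mathcal H} < \lim_n \|T^n u - u_0\|_{\mathcal H}.
$$
Therefore $u_0 = u_1$ and the whole orbit $(T^n u)_n$ converges weakly to this common limit in $\fix T$.

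The conceptual heart of the argument, and the only nontrivial obstacle, is recognizing that the definition of strong nonexpansivity is tailored exactly to promote Fej\'er monotonicity into the asymptotic regularity $T^n u - T^{n+1} u \to 0$; once this is in hand, the proof is a textbook application of the Browder demi-closedness principle plus Opial's lemma, which together form the standard toolbox for weak convergence of Krasnoselski-Mann-type iterations.
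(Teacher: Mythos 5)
Your proof is correct. Note first that the paper itself offers no proof of this proposition at all: it is imported verbatim as Corollaries 1.3--1.5 of Bruck and Reich \cite{br}, so there is no ``paper's route'' to compare against line by line. Your argument supplies the standard self-contained derivation in the Hilbert-space setting, and the key move is exactly right: applying the definition of strong nonexpansivity to the pair $u_n = T^n u$, $v_n = p$ with $p$ a fixed point converts Fej\'er monotonicity of $\|T^n u - p\|_{\mathcal H}$ into asymptotic regularity $T^n u - T^{n+1}u \to 0$. From there, demiclosedness of $I-T$ at $0$ plus Opial's property finish the job. It is worth observing that once you have asymptotic regularity, you could shortcut the last two steps by invoking Theorem \ref{opial} (Opial's fixed point theorem), which the paper states a page later and whose hypotheses --- nonexpansive, asymptotically regular, $\fix T \neq \emptyset$ --- are precisely what you have established at that point; your explicit demiclosedness-plus-Opial's-lemma argument is essentially an unpacking of that theorem's proof. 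One minor remark: the original Bruck--Reich result holds in more general Banach spaces, where the argument is necessarily different (Opial's property can fail there), but in the Hilbert-space context of this paper your proof is complete and correct.
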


\begin{proof}(Proposition \ref{fixptitpr})
Orthogonal projections onto convex sets are strongly nonexpansive \cite[Corollary 4.2.3]{bbl}. Moreover, composition of strongly nonexpansive maps are strongly nonexpansive \cite[Lemma 2.1]{br}. By an application of Proposition \ref{bbl} we immediately have the result, since any map of the type $T(\xi) = Q(\xi) + \xi_0$ is strongly nonexpansive whenever $Q$ is (this is a simple observation from the definition of strongly nonexpansive map).
Indeed, we are looking for fixed points of $\eta = \pi_{V_2} P_{\alpha K_\psi} (\eta - (z+ u_2))$ or, equivalently, of $\xi = \underbrace{\pi_{V_2} P_{\alpha K_\psi}}_{:=Q} (\xi) - \underbrace{(z+u_2)}_{:=\xi_0}$.

\end{proof}
In Examples \ref{ex3}, we have already observed that
$$
u_1^* = \prox_{\alpha \psi(\cdot +u_2)} (z).
$$
For consistency with the terminology of generalized thresholding in signal processing, we may call the map $\prox_{\alpha \psi(\cdot +u_2)}$ an {\it oblique thresholding} and we denote it by
$$
\mathbb S_\alpha^{\psi,V_1,V_2} (z;u_2):= \prox_{\alpha \psi(\cdot +u_2)}(z).
$$
The attribute ``{\it oblique}'' emphasizes the presence of an additional subspace which acts for the computation of the thresholded solution.
By using results in \cite[Subsection 2.3]{CW} (see also \cite[II.2-3]{ET}) we can already infer that
$$
\| S_\alpha^{\psi,V_1,V_2} (z_1;u_2) -  S_\alpha^{\psi,V_1,V_2}(z_2;u_2)\|_{\mathcal{H}} \leq \| z_1 - z_2\|_{\mathcal{H}}, \quad \mbox{for all } z_1,z_2 \in V_1.
$$

\subsection{Convergence of the subspace minimization}

In light of the results of the previous subsection, the iterative algorithm (\ref{m2}) can be equivalently be rewritten as
\begin{equation}
\label{m3}
u_1^{(\ell+1)} = S_\alpha^{\psi,V_1,V_2}(u_1^{(\ell)} + \pi_{V_1} T^* (g - T u_2 - T   u_1^{(\ell)});u_2).
\end{equation}
In certain cases, e.g., in finite dimensions, the iteration can be explicitely computed by
$$
u_1^{(\ell+1)} = S_\alpha^\psi(u_1^{(\ell)} + \pi_{V_1} T^* (g - T u_2 - T   u_1^{(\ell)}) + u_2 - \eta^{(\ell)}) - u_2,
$$
where $\eta^{(\ell)} \in V_2$ is any solution of the fixed point equation
$$
\eta = \pi_{V_2} P_{\alpha K_\psi}(\eta- (u_1^{(\ell)} + \pi_{V_1} T^* (g - T u_2 - T   u_1^{(\ell)})+ u_2)).
$$
The computation of $\eta^{(\ell)}$ can be (approximatively) implemented by the algorithm (\ref{fixptit}).

\begin{theorem}\label{main2}
Assume $u_2 \in V_2^\psi$ and $\| T\| <1$. Then the iteration (\ref{m3}) converges weakly to a solution $u^*_1 \in V_1^\psi$ of (\ref{m1}) for any initial choice of $u_1^{(0)} \in V_1^\psi$.
\end{theorem}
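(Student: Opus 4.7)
The plan is to mimic the classical surrogate-functional argument of Daubechies, Defrise and De Mol, adapted to the subspace–restricted minimization. I fix $u_2 \in V_2^\psi$ throughout and treat $u_1\mapsto \mathcal{J}(u_1+u_2)$ as a functional on $V_1$.

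\emph{Step 1: Monotonicity of energies.} Since $\|T\|<1$, the surrogate correction $\|u_1-a\|_{\mathcal H}^2-\|T(u_1-a)\|_{\mathcal H}^2\ge (1-\|T\|^2)\|u_1-a\|_{\mathcal H}^2$ is nonnegative and strictly convex in $u_1$, so the minimizer in (\ref{m2}) is unique (this is exactly $u_1^{(\ell+1)}$ from (\ref{m3})). A standard chain of inequalities,
\[
\mathcal J(u_1^{(\ell+1)}+u_2)\le \mathcal J^s_1(u_1^{(\ell+1)}+u_2,u_1^{(\ell)})\le \mathcal J^s_1(u_1^{(\ell)}+u_2,u_1^{(\ell)})=\mathcal J(u_1^{(\ell)}+u_2),
\]
shows that $\mathcal J(u_1^{(\ell)}+u_2)$ is nonincreasing, and, being nonnegative, converges.

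\emph{Step 2: Asymptotic regularity.} Combining the two inequalities of Step 1 with the lower bound on the surrogate correction yields
\[
(1-\|T\|^2)\,\|u_1^{(\ell+1)}-u_1^{(\ell)}\|_{\mathcal H}^2\le \mathcal J(u_1^{(\ell)}+u_2)-\mathcal J(u_1^{(\ell+1)}+u_2)\longrightarrow 0,
\]
so $\|u_1^{(\ell+1)}-u_1^{(\ell)}\|_{\mathcal H}\to 0$.

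\emph{Step 3: Boundedness and existence of weak cluster points.} Since $\mathcal{J}(u_1^{(\ell)}+u_2)\le \mathcal{J}(u_1^{(0)}+u_2)<\infty$, the coercivity assumption (C) gives a uniform bound $\|u_1^{(\ell)}\|_{\mathcal H}+\psi(u_1^{(\ell)}+u_2)\le C$, hence $\|u_1^{(\ell)}\|_{\mathcal H^\psi}\le C'$ by $(\Psi2)$ and the continuity of $\pi_{V_1}|_{\mathcal H^\psi}$. By (H1) there is a subsequence converging in $\tau^\psi$ and, after a further extraction, weakly in $\mathcal H$ to some $u_1^\star\in V_1^\psi$.

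\emph{Step 4: The iteration map is nonexpansive on $V_1$.} Writing (\ref{m3}) as $u_1^{(\ell+1)}=\mathcal T(u_1^{(\ell)})$ with
$\mathcal T(v):=\mathbb S_\alpha^{\psi,V_1,V_2}\bigl((I-\pi_{V_1}T^*T)v+\pi_{V_1}T^*(g-Tu_2);\,u_2\bigr)$, nonexpansivity follows from two facts: the linear map $(I-\pi_{V_1}T^*T)$ restricted to $V_1$ satisfies $\|(I-\pi_{V_1}T^*T)v\|_{\mathcal H}^2\le \|v\|_{\mathcal H}^2+(\|T\|^2-2)\|Tv\|_{\mathcal H}^2\le \|v\|_{\mathcal H}^2$ for $v\in V_1$ when $\|T\|<1$; and $\mathbb S_\alpha^{\psi,V_1,V_2}(\cdot;u_2)$ is nonexpansive on $V_1$ as recalled at the end of Subsection 4.2.

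\emph{Step 5: The limit is a minimizer.} Combining Steps 2 and 4 with the demiclosedness of $I-\mathcal T$ (standard for nonexpansive maps on Hilbert space) gives $\mathcal T(u_1^\star)=u_1^\star$. By the equivalence $(i)\Leftrightarrow(ii)$ in Theorem \ref{main1}, such a fixed point satisfies $0\in u_1^\star-(u_1^\star+\pi_{V_1}T^*(g-Tu_2-Tu_1^\star)-\eta)+\alpha\partial_{\mathcal H^\psi}\psi(u_1^\star+u_2)$ for some $\eta\in\Range(\pi_{V_2}|_{\mathcal H^\psi})^*$, i.e.\ $-\eta\in 2(u_1^\star-(g-Tu_2-Tu_1^\star))+2\alpha\partial_{\mathcal H^\psi}\psi(u_1^\star+u_2)$ after absorbing the linear term; another application of Theorem \ref{main1} (used in the reverse direction to derive the Euler–Lagrange condition for (\ref{m1})) identifies this as the optimality condition for (\ref{m1}). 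Hence $u_1^\star$ solves (\ref{m1}).

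\emph{Step 6: Whole-sequence convergence.} Because every weakly cluster point is a fixed point of the nonexpansive map $\mathcal T$, Opial's lemma applied to the iterates $(u_1^{(\ell)})$ with the asymptotic regularity from Step 2 upgrades subsequential weak convergence to weak convergence of the full sequence to a single $u_1^\star\in V_1^\psi$.

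I expect the main obstacle to be Step 5: converting the fixed-point identity $\mathcal T(u_1^\star)=u_1^\star$ into the Euler–Lagrange characterization of (\ref{m1}) requires careful bookkeeping between subdifferentials on $\mathcal H$ and on $\mathcal H^\psi$ and an appropriate choice of multiplier $\eta$, precisely the content of Theorem \ref{main1} and Remark \ref{rem1}; the demiclosedness step, although standard, must be verified for the composed nonexpansive map $\mathcal T$ (not merely for $\mathbb S_\alpha^{\psi,V_1,V_2}$).
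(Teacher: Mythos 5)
Your proposal is correct and follows essentially the same route as the paper: surrogate-functional descent giving monotonicity and asymptotic regularity, nonexpansiveness of the composed map $\mathbb S_\alpha^{\psi,V_1,V_2}(\cdot\,;u_2)\circ(I-\pi_{V_1}T^*T\pi_{V_1})$, identification of fixed points with minimizers of (\ref{m1}) via the Lagrange-multiplier equivalence of Theorem \ref{main1}, and an Opial-type argument for weak convergence of the whole sequence. Your Steps 3, 5 and 6 (coercivity, demiclosedness of $I-\mathcal T$, Opial's lemma) merely unpack the hypotheses and proof of Opial's theorem, which the paper invokes directly, so the two arguments coincide in substance.
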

\begin{proof}
For the sake of completeness, we report the proof of this theorem, which follows the same strategy already proposed  in the paper \cite{dadede04}, compare also similar results in \cite{CW}. In particular we want to apply Opial's fixed point theorem:
\\

\begin{theorem}[\cite{op67}]
\label{opial}
Let the mapping $A$ from $\mathcal{H}$ to $\mathcal{H}$ satisfy the following conditions:
\begin{itemize}
\item[(i)] $A$ is nonexpansive: for all $z,z' \in \mathcal H$, $\| A z - A z'\|_{\mathcal H} \leq \| z-z'\|_{\mathcal H}$;
\item[(ii)] $A$ is asymptotically regular: for all $z \in \mathcal{H}$, $\| A^{n+1} z- A^{n}z \|_{\mathcal H} \rightarrow 0$, for $n \rightarrow \infty$;
\item[(iii)] the set $\mathcal{F} =\fix A$ of fixed points of $A$ in $\mathcal{H}$ is not empty.
\end{itemize}
Then for all $z \in \mathcal{H}$, the sequence $(A^n z)_{n \in \mathbb{N}}$ converges weakly to a fixed point in $\mathcal{F}$.
\end{theorem}\\


We need to prove that $A(u_1) :=  S_\alpha^{\psi,V_1,V_2}(u_1 + \pi_{V_1} T^* (g - T u_2 - T   u_1);u_2)$ fulfills the assumptions of the Opial's theorem on $V_1$.

Step 1. As stated at the beginning of this section, there exist solutions $u^*_1 \in V_1^\psi$ to (\ref{m1}). With a similar argument to the one used to prove the equivalence of (i) and (ii) in Theorem \ref{main1}, the optimality of $u^*_1$ can be readily proved equivalent to 
$$
0 \in - \pi_{V_1} T^* (g - T u_2 - T   u_1^*)+ \eta +  \alpha \partial_{\mathcal H^\psi} \psi (u^*_1 + u_2),
$$
for some $\eta \in (V_2^\psi)'$. By adding and subtracting $u^*_1$ we obtain
$$
0 \in u^*_1 -(\underbrace{( u^*_1+ \pi_{V_1} T^* (g - T u_2 - T   u_1^*))}_{:=z}- \eta) +  \alpha \partial_{\mathcal H^\psi} \psi (u^*_1 + u_2),
$$
By applying the equivalence of (i) and (ii) in Theorem  \ref{main1} we obtain that $u_1^*$ is a fixed point of the following equation 
$$
u_1^*  = S_\alpha^{\psi,V_1,V_2}(u_1^* + \pi_{V_1} T^* (g - T u_2 - T   u_1^*);u_2),
$$
hence $\fix A \neq \emptyset$.

Step 2. The algorithm produces iterations which are asymptotically regular, i.e., $\|u_1^{(\ell+1)} - u_1^{(\ell)}\|_{\mathcal H} \to 0$. Indeed, by using  $\|T\|<1$ and $C:=1- \|T\|^2>0$, we have the following estimates
\begin{eqnarray*}
\mathcal J(u_1^{(\ell)} + u_2) &=& \mathcal J^s_1(u_1^{(\ell)} + u_2, u_1^{(\ell)})\\
&\geq& \mathcal J^s_1(u_1^{(\ell+1)} + u_2, u_1^{(\ell)})  \\
&\geq& \mathcal J^s_1(u_1^{(\ell+1)} + u_2, u_1^{(\ell+1)}) = \mathcal J(u_1^{(\ell+1)} + u_2),
\end{eqnarray*}
See also (\ref{decr}) and (\ref{decr2}) below. 
Since $(\mathcal J(u_1^{(\ell)} + u_2))_\ell$ is monotonically decreasing and bounded from below by $0$, necessarily it is a convergent sequence.
Moreover, 
$$
\mathcal J(u_1^{(\ell)} + u_2) - \mathcal J(u_1^{(\ell+1)} + u_2) \geq C \|u_1^{(\ell+1)} - u_1^{(\ell)}\|_{\mathcal H}^2,
$$
and the latter convergence implies $\|u_1^{(\ell+1)} - u_1^{(\ell)}\|_{\mathcal H} \to 0$.

Step 3. We are left with showing the nonexpansiveness of $A$. By nonexpansiveness of $ S_\alpha^{\psi,V_1,V_2}(\cdot;u_2)$ we obtain
\begin{eqnarray*}
&&\|  S_\alpha^{\psi,V_1,V_2}(u_1^1 + \pi_{V_1} T^* (g - T u_2 - T   u_1^1;u_2) -  S_\alpha^{\psi,V_1,V_2}(u_1^2 + \pi_{V_1} T^* (g - T u_2 - T   u_1^2;u_2) \|_{\mathcal H} \\
&\leq& \|u_1^1 + \pi_{V_1} T^* (g - T u_2 - T   u_1^1) - (u_1^2 + \pi_{V_1} T^* (g - T u_2 - T   u_1^2)\|_{\mathcal H} \\
&=& \| (I- \pi_{V_1} T^* T \pi_{V_1})( u_1^1 - u_1^2)\|_{\mathcal H}\\
&\leq& \|u_1^1 - u_1^2\|_{\mathcal H}
\end{eqnarray*}
In the latter inequality we used once more that $\|T\|<1$.
\end{proof}

We do not insist on conditions for the strong convergence of the iteration (\ref{m3}), which is not a relevant issue, see, e.g., \cite{CW,dateve06} for a further discussion in this direction. Indeed, the practical realization of (\ref{schw_sp})  will never solve completely the subspace minimizations. 
\\

Let us conclude this section mentioning that all the results presented here hold symmetrically for the minimization on $V_2$, and that the notations should be just adjusted accordingly.

\section{Convergence of the Sequential Alternating Subspace Minimization}

We return to the algorithm (\ref{schw_sp}). In the following we denote $u_i = \pi_{V_i} u$ for $i=1,2$. Let us explicitly express the algorithm  as follows:
 Pick an initial $V_1\oplus V_2 \ni u_1^{(0,L)}+ u_2^{(0,M)} : = u^{(0)} \in \mathcal H^\psi$, for example $u^{(0)}=0$, and iterate
\begin{equation}
\label{schw_sp:it2}
\left \{ 
\begin{array}{ll}
\left \{ 
\begin{array}{ll}
u_1^{(n+1,0)} =  u_1^{(n,L)}&\\
u_1^{(n+1,\ell+1)} =  \argmin_{u_1 \in V_1} \mathcal J_1^s(u_1+ u_2^{(n,M)}, u_1^{(n+1,\ell)}) & \ell=0,\dots, L-1\\
\end{array}\right. &\\
\left \{ 
\begin{array}{ll}
u_2^{(n+1,0)} =  u_2^{(n,M)}&\\
u_2^{(n+1,m+1)} = \argmin_{u_2 \in V_2} \mathcal J_2^s(u_1^{(n+1,L)}+ u_2, u_2^{(n+1,m)}) &m=0,\dots, M -1\\
\end{array}\right. &\\
u^{(n+1)}:=u_{1}^{(n+1,L)} + u_{2}^{(n+1,M)}.
\end{array}
\right.
\end{equation} 
Note that we do prescribe a finite number $L$ and $M$ of inner iterations for each subspace respectively.
In this section we want to prove its convergence for any choice of $L$ and $M$.\\

Observe that, for $a \in V_i$ and $\|T\|<1$, 
\begin{equation}
\| u_i- a\|_{\mathcal H}^2 - \| Tu_i- Ta\|^2_{\mathcal H} \geq C \| u_i- a\|_{\mathcal H}^2,
\end{equation}
for $C=(1-\|T\|^2)>0$. Hence
\begin{equation}
\label{decr}
\mathcal {J}(u) = \mathcal {J}_i^S(u,u_i) \leq \mathcal {J}_i^S(u,a), 
\end{equation}
and
\begin{equation}
\label{decr2}
\mathcal {J}_i^S(u,a) - \mathcal {J}_i^S(u,u_i) \geq C \|u_i-a\|^2_{\mathcal H}.
\end{equation}

\begin{theorem}[Convergence properties]
\label{weak-conv}
The algorithm in (\ref{schw_sp:it2}) produces a sequence $(u^{(n)})_{n\in \mathbb{N}}$ in  $\mathcal H^\psi$ with the following properties:
\begin{itemize}
\item[(i)] $\mathcal{J}(u^{(n)}) > \mathcal{J}(u^{(n+1)})$ for all $n \in \mathbb{N}$ (unless $u^{(n)}= u^{(n+1)}$);
\item[(ii)] $\lim_{n \to \infty} \| u^{(n+1)} -  u^{(n)}\|_{\mathcal H} =0$;
\item[(iii)] the sequence  $(u^{(n)})_{n\in \mathbb{N}}$ has subsequences which converge weakly in $\mathcal H$ and in $\mathcal H^\psi$ endowed with the topology $\tau^\psi$;
\item[(iv)] if we additionally assume, for simplicity, that $\dim \mathcal H < \infty$, $(u^{(n_k)})_{k \in \mathbb{N}}$ is a strongly converging subsequence, and  $u^{(\infty)}$ is its limit, then $u^{(\infty)}$ is a minimizer of $\mathcal J$ whenever one of the following conditions holds
\begin{itemize}
\item[(a)] $\psi(u_1^{(\infty)} + \eta_2) + \psi(u_2^{(\infty)} + \eta_1) - \psi(u_1^{(\infty)}+ u_2^{(\infty)}) \leq \psi(\eta_1 + \eta_2)$ for all $\eta_i \in V_i$, $i=1,2$;
\item[(b)] $\psi$ is differentiable at $u^{(\infty)}$ with respect to $V_i$ for one $i \in \{1,2\}$, i.e., there exists $\frac{\partial}{\partial V_i} \psi (u^{(\infty)}):=\zeta_i \in (V_i)'$ such that 
$$
\langle \zeta_i, v_i \rangle = \lim_{t \to 0} \frac{\psi(u_1^{(\infty)}+u_2^{(\infty)} + t v_i) - \psi(u_1^{(\infty)}+u_2^{(\infty)})}{t}, \mbox{ for all } v_i \in V_i.
$$
\end{itemize}  
\end{itemize}
\end{theorem}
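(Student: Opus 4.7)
For (i) and (ii), the plan is to chain the surrogate inequalities inside each block. At every inner step $\ell\to\ell+1$ of the $V_1$-block, $u_1^{(n+1,\ell+1)}$ is by construction the minimizer of $\mathcal J_1^s(\,\cdot\,+u_2^{(n,M)},\,u_1^{(n+1,\ell)})$, so the surrogate drops, and the identity $\mathcal J_i^s(u,u_i)=\mathcal J(u)$ in (\ref{decr}) transfers the drop to $\mathcal J$ itself; the same argument applies in the $V_2$-block, yielding (i), strict unless the iterates are stationary. The sharper bound (\ref{decr2}) then gives $\mathcal J(u_1^{(n+1,\ell)}+u_2^{(n,M)}) - \mathcal J(u_1^{(n+1,\ell+1)}+u_2^{(n,M)}) \ge C\|u_1^{(n+1,\ell+1)}-u_1^{(n+1,\ell)}\|_{\mathcal H}^2$ and analogously in the $V_2$-block; since $\mathcal J\ge 0$, the telescoped sum over $n,\ell,m$ is finite, each inner-level difference vanishes, and the triangle inequality across the finitely many inner steps forces $\|u^{(n+1)}-u^{(n)}\|_{\mathcal H}\to 0$.

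For (iii), monotonicity combined with coercivity (C) bounds $(u^{(n)})_n$ in $\mathcal H$, while $2\alpha\psi(u^{(n)})\le\mathcal J(u^{(0)})$ bounds it in $\mathcal H^\psi$; assumption (H1) extracts a subsequence convergent in $\tau^\psi$, and a diagonal extraction supplies weak convergence in $\mathcal H$ as well. The real work is (iv). Assume finite dimension and let $u^{(n_k)}\to u^{(\infty)}$ strongly. Asymptotic regularity from (ii) forces every inner-level iterate $u_i^{(n_k+1,\ell)}$ to converge to $u_i^{(\infty)}$, so passing to the limit in the defining minimization conditions (using continuity of $T$ and closedness of $\partial\psi$) yields the \emph{subspace optimality}
\[
u_1^{(\infty)} \in \argmin_{v_1\in V_1}\mathcal J(v_1+u_2^{(\infty)}),\qquad u_2^{(\infty)} \in \argmin_{v_2\in V_2}\mathcal J(u_1^{(\infty)}+v_2).
\]
Writing $r:=Tu^{(\infty)}-g$, this is equivalent to the pair of variational inequalities
\[
\psi(u_j^{(\infty)}+w_i) \;\ge\; \psi(u^{(\infty)}) - \tfrac{1}{\alpha}\langle T^*r,\,w_i-u_i^{(\infty)}\rangle,\qquad w_i\in V_i,\ \{i,j\}=\{1,2\}.
\]

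Under hypothesis (a), I sum these two inequalities with $w_i=\pi_{V_i}w$ for arbitrary $w\in\mathcal H$ and bound the left-hand side by (a), namely $\psi(u_1^{(\infty)}+w_2) + \psi(u_2^{(\infty)}+w_1) \le \psi(u^{(\infty)}) + \psi(w)$; the $\psi(u^{(\infty)})$ terms absorb and one is left with
\[
\psi(w) \;\ge\; \psi(u^{(\infty)}) - \tfrac{1}{\alpha}\langle T^*r,\, w-u^{(\infty)}\rangle \qquad \text{for every } w\in\mathcal H,
\]
which is precisely the variational characterization of $u^{(\infty)}$ as a minimizer of the convex $\mathcal J$. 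Under hypothesis (b), G\^ateaux differentiability of $\psi$ along $V_i$ forces every $\zeta\in\partial\psi(u^{(\infty)})$ to share the same $V_i$-projection, which by the $V_i$-subspace optimality must equal $-\pi_{V_i}T^*r/\alpha$; the complementary $V_j$-subspace optimality then exhibits an element $\zeta\in\partial\psi(u^{(\infty)})$ with $\pi_{V_j}\zeta = -\pi_{V_j}T^*r/\alpha$, so $\zeta=-T^*r/\alpha$ and global optimality follows.

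The main obstacle is recognizing the right form in which to apply condition (a): the naive direct comparison $\mathcal J(w_1+u_2^{(\infty)})\ge\mathcal J(u^{(\infty)})$ leaves behind quadratic cross-terms $\langle T(w_1-u_1^{(\infty)}),\,T(w_2-u_2^{(\infty)})\rangle$ of indeterminate sign that cannot be absorbed by (a) alone, whereas the subdifferential/variational-inequality reformulation above converts subspace optimality into a purely $\psi$-linear condition on which (a) acts cleanly. A secondary technical point is propagating the limit through all inner-block iterates, which relies on (ii) together with the finite-dimensional assumption to turn the weak $\tau^\psi$-extraction into strong convergence and to exploit continuity of the subdifferential map of $\psi$.
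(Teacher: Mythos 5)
Your treatment of (i)--(iii) and of case (a) in (iv) is correct and follows essentially the same route as the paper: the surrogate inequalities (\ref{decr})--(\ref{decr2}) chained over the inner iterations give the monotone decrease and the vanishing of all inner increments, coercivity (C) plus (H1) gives the subsequential limits, and passing to the limit in the inner optimality conditions yields exactly the paper's limiting variational inequalities (\ref{dincl1})--(\ref{dincl2}) (your ``subspace optimality'' of $u^{(\infty)}$ is an equivalent reformulation of them), which you then sum over $w_i=\pi_{V_i}w$ and combine with (a) precisely as the paper does.

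There is, however, a gap in case (b). You assert that ``the complementary $V_j$-subspace optimality then exhibits an element $\zeta\in\partial\psi(u^{(\infty)})$ with $\pi_{V_j}\zeta=-\pi_{V_j}T^*r/\alpha$.'' Subspace optimality only gives that $-\frac{1}{\alpha}\pi_{V_1}T^*r$ lies in the \emph{partial} subdifferential $\partial_{V_1}\psi(\cdot+u_2^{(\infty)})(u_1^{(\infty)})$; it does not by itself produce a \emph{full} subgradient of $\psi$ at $u^{(\infty)}$ with that $V_1$-projection. The implication you need -- every partial subgradient is the shadow of some element of $\partial_{\mathcal H}\psi(u^{(\infty)})$ -- is the nontrivial direction (the converse, which you use to argue that all full subgradients share the same $V_2$-component, is the easy one), and it is exactly the point where the paper invokes \cite[Corollary 10.11]{rowe98} applied to $\tilde\psi(u_1,u_2)=\psi(u_1+u_2)$; this is legitimate here because in finite dimensions (H3) forces $\psi$ to be finite-valued, hence continuous, so the partial subdifferential coincides with the projection of the full one. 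You can close the gap either by citing that fact, or by a direct argument avoiding it: writing $\zeta_2^*$ for the $V_2$-derivative, subspace optimality and differentiability give $\zeta_2^*=-\frac{1}{\alpha}\pi_{V_2}T^*r$, and then subadditivity of the directional derivative $d\mapsto\psi'(u^{(\infty)};d)$ together with its linearity on $V_2$ yields, for all $v_i\in V_i$, $\psi'(u^{(\infty)};v_1+v_2)\ge\psi'(u^{(\infty)};v_1)-\psi'(u^{(\infty)};-v_2)\ge\langle-\frac{1}{\alpha}T^*r,\,v_1+v_2\rangle$, hence $-\frac{1}{\alpha}T^*r\in\partial_{\mathcal H}\psi(u^{(\infty)})$ and global optimality follows. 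As written, though, the decisive step of case (b) is unsupported.
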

\begin{proof}
Let us first observe that 
\begin{eqnarray*}
 \mathcal {J}(u^{(n)})= \mathcal {J}_1^S(u^{(n)}_1+ u^{(n)}_2, u^{(n)}_1)&=& \mathcal {J}^S_1(u_1^{(n,L)}  + u_2^{(n)}, u_1^{(n+1,0)}).
\end{eqnarray*}
By definition of $u_1^{(n+1,1)}$ and the minimal properties of $u_1^{(n+1,1)}$  in (\ref{schw_sp:it2}) we have
$$
\mathcal {J}_1^S(u_1^{(n,L)}  + u_2^{(n)}, u_1^{(n+1,0)}) \geq \mathcal {J}^S(u_1^{(n+1,1)}   + u_2^{(n)}, u_1^{(n+1,0)}).
$$
From (\ref{decr}) we have
$$
 \mathcal {J}^S_1(u_1^{(n+1,1)}   + u_2^{(n)}, u_1^{(n+1,0)}) \geq\mathcal {J}^S_1(u_1^{(n+1,1)}   + u_2^{(n)}, u_1^{(n+1,1)}).
$$
Putting in line these inequalities we obtain 
$$
 \mathcal {J}(u^{(n)}) \geq \mathcal {J}(u_1^{(n+1,1)}   + u_2^{(n)})
$$
In particular, from (\ref{decr2}) we have
$$
 \mathcal {J}(u^{(n)}) -     \mathcal {J}(u_1^{(n+1,1)}   + u_2^{(n)}) \geq C \| u_1^{(n+1,1)} - u_1^{(n+1,0)}\|_{\mathcal H}^2.
$$
After $L$ steps we conclude the estimate
\begin{eqnarray*}
\mathcal {J}(u^{(n)}) &\geq&\mathcal {J}(u_1^{(n+1,L)}   + u_2^{(n)}),
\end{eqnarray*}
and
$$
 \mathcal {J}(u^{(n)}) -  \mathcal {J}(u_1^{(n+1,L)}   + u_2^{(n)})\geq C \sum_{\ell=0}^{L-1}\| u_1^{(n+1,\ell+1)} - u_1^{(n+1,\ell)}\|_{\mathcal H}^2.
$$
By definition of $u_2^{(n+1,1)}$ and its minimal properties we have
\begin{eqnarray*}
&& \mathcal {J}(u_1^{(n+1,L)}   + u_2^{(n)})\geq  \mathcal {J}^S_2(u_1^{(n+1,L)}   + u_2^{(n+1,1)}, u_2^{(n+1,0)} ).
\end{eqnarray*}
By similar arguments as above we finally find the decreasing estimate
\begin{equation}
\label{decr3}
\mathcal {J}(u^{(n)}) \geq  \mathcal {J}^S_2(u_1^{(n+1,L)}   + u_2^{(n+1,M)}) = \mathcal {J}(u^{(n+1)}),
\end{equation}
and
$$
\mathcal {J}(u^{(n)}) - \mathcal {J}(u^{(n+1)}) 
$$
\begin{equation}
\label{coercive}\geq C \left ( \sum_{\ell=0}^{L-1}\| u_1^{(n+1,\ell+1)} - u_1^{(n+1,\ell)}\|_{\mathcal H}^2 + \sum_{m=0}^{M-1}\| u_2^{(n+1,m+1)} - u_2^{(n+1,m)}\|_{\mathcal H}^2 \right ).
\end{equation}
From (\ref{decr3}) we have $\mathcal {J}(u^{(0)}) \geq \mathcal {J}(u^{(n)})$. By the coerciveness condition (C) $(u^{(n)})_{n \in \mathbb{N}}$ is uniformly bounded in $\mathcal H^\psi$, hence there exists a $\mathcal H$-weakly- and $\tau^\psi$-convergent subsequence $(u^{(n_j)})_{j \in \mathbb{N}}$. 
Let us denote $u^{(\infty)}$ the weak limit of the subsequence. For simplicity, we rename such a subsequence by $(u^{(n)})_{n \in \mathbb{N}}$. Moreover, since the sequence $(\mathcal {J}(u^{(n)}))_{n \in \mathbb{N}}$ is monotonically decreasing and bounded from below by 0, it is also convergent. From (\ref{coercive}) and the latter convergence we deduce 
\begin{equation}
\label{asymp_reg}
\left ( \sum_{\ell=0}^{L-1}\| u_1^{(n+1,\ell+1)} - u_1^{(n+1,\ell)}\|_{\mathcal H}^2 + \sum_{m=0}^{M-1}\| u_2^{(n+1,m+1)} - u_2^{(n+1,m)}\|_{\mathcal H}^2 \right ) \rightarrow 0, \quad n \to \infty.
\end{equation}
In particular, by the standard inequality $(a^2+b^2) \geq \frac{1}{2} (a+b)^2$ for $a,b>0$ and the triangle inequality, we have also
\begin{equation}
\label{asymp_reg2}
\| u^{(n)} -  u^{(n+1)} \|_{\mathcal H} \rightarrow 0, \quad n\to \infty.
\end{equation}
We would like now to show that the following outer lower semicontinuity holds
$$
0 \in \lim_{n \to \infty} \partial \mathcal {J}(u^{(n)}) \subset \partial \mathcal {J}(u^{(\infty)}).
$$
For this we need to assume that $\mathcal H$-weakly- and $\tau^\psi-$convergences do imply strong convergence in $\mathcal H$. This is the case, e.g., when $\dim(\mathcal H) < \infty$. The optimality condition for $  u_1^{(n+1,L)}$ is equivalent to 
\begin{equation}
\label{dinc1}
0 \in u_1^{(n+1,L)} - z_1^{(n+1)} + \alpha \partial_{V_1} \psi(\cdot + u_2^{(n,M)})(u_1^{(n+1,L)}),
\end{equation}
where
$$
z_1^{(n+1)}:= u_1^{(n+1,L-1)} + \pi_{V_1} T^* ( g - T  u_2^{(n,M)} - T u_1^{(n+1,L-1)}).
$$
Analogously we have
\begin{equation}
\label{dinc2}
0 \in u_2^{(n+1,M)} - z_2^{(n+1)} + \alpha \partial_{V_2} \psi(\cdot + u_1^{(n+1,L)})(u_2^{(n+1,M)}),
\end{equation}
where
$$
z_2^{(n+1)}:= u_2^{(n+1,M-1)} + \pi_{V_2} T^* ( g - T  u_1^{(n+1,L)} - T u_2^{(n+1,M-1)}).
$$
Due to the strong convergence of the sequence $u^{(n)}$ and by (\ref{asymp_reg}) we have the following limits for $n \to \infty$
$$
\xi_1^{(n+1)}:= u_1^{(n+1,L)} - z_1^{(n+1)} \to \xi_1:=-\pi_{V_1}  T^* ( g - T  u_2^{(\infty)} - T u_1^{(\infty)}) \in V_1,
$$
$$
\xi_2^{(n+1)}:= u_2^{(n+1,M)} - z_2^{(n+1)} \to \xi_2:=-\pi_{V_2}  T^* ( g - T  u_2^{(\infty)} - T u_1^{(\infty)}) \in V_2,
$$
and
$$
\xi_1^{(n+1)}+ \xi_2^{(n+1)} \to \xi:=T^* (  T u^{(\infty)}- g ).
$$
Moreover, we have
$$
- \frac{1}{\alpha} \xi_1^{(n+1)} \in \partial_{V_1} \psi(\cdot + u_2^{(n,M)})(u_1^{(n+1,L)}),
$$
meaning that
$$
\langle - \frac{1}{\alpha} \xi_1^{(n+1)}, \eta_1 - u_1^{(n+1,L)} \rangle + \psi(u_1^{(n+1,L)}+ u_2^{(n,M)}) \leq \psi(\eta_1+ u_2^{(n,M)}), \quad \mbox{for all }\eta_1 \in V_1.
$$
Analogously we have

$$
\langle - \frac{1}{\alpha} \xi_2^{(n+1)}, \eta_2 - u_2^{(n+1,M)} \rangle + \psi(u_1^{(n+1,L)}+ u_2^{(n+1,M)}) \leq \psi(\eta_2+ u_1^{(n+1,L)}), \quad \mbox{for all }\eta_2 \in V_2.
$$
By taking the limits for $n \to \infty$  and by (\ref{asymp_reg}) we obtain 
\begin{equation}
\label{dincl1}
\langle - \frac{1}{\alpha} \xi_1, \eta_1 - u_1^{(\infty)} \rangle + \psi(u^{(\infty)}) \leq \psi(\eta_1+ u_2^{(\infty)}), \quad \mbox{for all }\eta_1 \in V_1.
\end{equation}
\begin{equation}
\label{dincl2}
\langle - \frac{1}{\alpha} \xi_2, \eta_2 - u_2^{(\infty)} \rangle + \psi(u_1^{(\infty)}) \leq \psi(\eta_2+ u_1^{(\infty)}), \quad \mbox{for all }\eta_2 \in V_2.
\end{equation}
These latter conditions are rewritten in vector form as
\begin{equation}
\label{vec}
0 \in \left ( \begin{array}{l}
\xi_1\\
\xi_2
\end{array}
\right ) + \alpha
\left ( \partial_{V_1} \psi(\cdot + u_2^{(\infty)})(u_1^{(\infty)}) \times
\partial_{V_2} \psi(\cdot + u_1^{(\infty)})(u_2^{(\infty)}) \right).
\end{equation}
Observe now that 
$$
2 \xi + 2 \alpha \partial_{\mathcal H} \psi(u^{(\infty)}) = 2 T^* (  T u^{(\infty)}- g )+  2 \alpha \partial_{\mathcal H} \psi(u^{(\infty)}) = \partial \mathcal J(u^{(\infty)}).
$$
If $0 \in \xi +  \alpha \partial_{\mathcal H} \psi(u^{(\infty)})$ then we would have the wanted minimality condition.
While the inclusion
$$
\partial_{\mathcal H} \psi(u^{(\infty)}) \subset \partial_{V_1} \psi(\cdot + u_2^{(\infty)})(u_1^{(\infty}) \times
\partial_{V_2} \psi(\cdot + u_1^{(\infty)})(u_2^{(\infty)}),
$$
easily follows from the definition of a subdifferential, the converse inclusion, which would imply from (\ref{vec}) the wished minimality condition, does not hold in general.
Thus, we show the converse inclusion under one of the following two conditions:
\begin{itemize}
\item[(a)] $\psi(u_1^{(\infty)} + \eta_2) + \psi(u_2^{(\infty)} + \eta_1) - \psi(u_1^{(\infty)}+ u_2^{(\infty)}) \leq \psi(\eta_1 + \eta_2)$ for all $\eta_i \in V_i$, $i=1,2$;
\item[(b)] $\psi$ is differentiable at $u^{(\infty)}$ with respect to $V_i$ for one $i \in \{1,2\}$, i.e., there exists $\frac{\partial}{\partial V_i} \psi (u^{(\infty)}):=\zeta_i \in (V_i)'$ such that 
$$
\langle \zeta_i, v_i \rangle = \lim_{t \to 0} \frac{\psi(u_1^{(\infty)}+u_2^{(\infty)} + t v_i) - \psi(u_1^{(\infty)}+u_2^{(\infty)})}{t}, \mbox{ for all } v_i \in V_i.
$$
\end{itemize}
Let us start with condition (a).
We want to show that 
$$
\langle- \frac{1}{\alpha}  \xi, \eta - u^{(\infty)} \rangle + \psi(u^{(\infty)}) \leq \psi(\eta), \quad \mbox{for all } \eta \in \mathcal H,
$$
or, equivalently, that
$$
\langle - \frac{1}{\alpha} \xi_1, \eta_1 - u_1^{(\infty)} \rangle + \langle - \frac{1}{\alpha} \xi_2, \eta_2 - u^{(\infty)}_2 \rangle + \psi(u^{(\infty)}_1+ u^{(\infty)}_2) \leq \psi(\eta_1+\eta_2), \quad \mbox{for all } \eta_i \in V_i,
$$
By the differential inclusions (\ref{dincl1}) and (\ref{dincl2}) we have
$$
\langle - \frac{1}{\alpha} \xi_1, \eta_1 - u_1^{(\infty)} \rangle + \langle - \frac{1}{\alpha} \xi_2, \eta_2 - u^{(\infty)}_2 \rangle + 2 \psi(u^{(\infty)}_1+ u^{(\infty)}_2) \leq \psi(u_1^{(\infty)} + \eta_2) + \psi(u_2^{(\infty)} + \eta_1), \quad \mbox{for all } \eta_i \in V_i,
$$
hence
\begin{eqnarray*}
&&\langle - \frac{1}{\alpha} \xi_1, \eta_1 - u_1^{(\infty)} \rangle + \langle - \frac{1}{\alpha} \xi_2, \eta_2 - u^{(\infty)}_2 \rangle + \psi(u^{(\infty)}_1+ u^{(\infty)}_2)\\
 &\leq& \psi(u_1^{(\infty)} + \eta_2) + \psi(u_2^{(\infty)} + \eta_1)-  \psi(u^{(\infty)}_1+ u^{(\infty)}_2), \quad \mbox{for all } \eta_i \in V_i.
\end{eqnarray*}
An application of condition (a) concludes the proof of the wanted differential inclusion.\\

Let us show the inclusion now under the assumption of condition (b). Without loss of generality, we assume that $\psi$ is differentiable at $u^{(\infty)}$ with respect to $V_2$.
First of all we define $\tilde \psi(u_1,u_2) := \psi(u_1+u_2)$.
Since $\psi$ is convex, by an application of \cite[Corollary 10.11]{rowe98}, we have
$$
\partial_{V_1} \psi(\cdot + u_2)(u_1) \simeq \partial_{u_1} \tilde \psi(u_1,u_2) = \{ \zeta_1 \in V_1' : \exists \zeta_2 \in V_2': (\zeta_1,\zeta_2)^T \in \partial \tilde \psi(u_1,u_2) \simeq \partial_{\mathcal H} \psi(u_1+u_2)\}.
$$
Since $\psi$ is differentiable at $u^{(\infty)}$ with respect to $V_2$, for any $ (\zeta_1,\zeta_2)^T \in \partial \tilde \psi(u_1,u_2) \simeq \partial_{\mathcal H} \psi(u_1+u_2)$ we have necessarily
$\zeta_2 = \frac{\partial}{\partial V_2} \psi (u^{(\infty)})$ as the unique member of $\partial_{V_2} \psi(\cdot + u_1^{(\infty)})(u_2^{(\infty)})$. Hence, the following inclusion must also hold
\begin{eqnarray*}
0 &\in& \left ( \begin{array}{l}
\xi_1\\
\xi_2
\end{array}
\right ) + \alpha
\left ( \partial_{V_1} \psi(\cdot + u_2^{(\infty)})(u_1^{(\infty}) \times
\partial_{V_2} \psi(\cdot + u_1^{(\infty)})(u_2^{(\infty)}) \right) \\
&\subset& \left ( \begin{array}{l}
\xi_1\\
\xi_2
\end{array}
\right ) + \alpha \partial_{V_1 \times V_2} \tilde \psi(u_1,u_2) \\
&\simeq& \xi + \alpha  \partial_{\mathcal H} \psi(u^{(\infty)}).
\end{eqnarray*}
\end{proof}
\begin{remark}
Observe that, by choosing $\eta_1=\eta_2=0$, condition (a) and $(\Psi1)$ imply that
$$
\psi (u_1^{(\infty)}) + \psi (u_2^{(\infty)}) \leq \psi (u_1^{(\infty)}+u_2^{(\infty)})
$$ 
The sublinearity $(\Psi2)$ finally implies the splitting
$$
\psi (u_1^{(\infty)}) + \psi (u_2^{(\infty)}) = \psi (u_1^{(\infty)}+u_2^{(\infty)})
$$
Conversely, if $\psi (v_1) + \psi (v_2) = \psi (v_1+ v_2)$ for all $v_i \in V_i$, $i=1,2$, then condition (a) easily follows. As previously discussed, this latter splitting condition  holds only in special cases.
Also condition (b) is not in practice always verified, as we will illustrate with numerical examples in Section \ref{accel}. Hence, we can affirm that in general we cannot expect convergence of the algorithm to minimizers of $\mathcal J$, although it certainly converges to points for which  $\mathcal J$ is smaller than the starting choice $\mathcal J(u^{(0)})$. However, as we will show in the numerical experiments related to total variation minimization (Section \ref{domdec}), the computed limit can be very close to the expected minimizer.   
\end{remark}
\section{A Parallel Alternating Subspace Minimization and its Convergence}

The most immediate modification to (\ref{schw_sp:it2}) is provided by substituting $u_1^{(n,L)}$ instead of $u_1^{(n+1,L)}$ in the second iteration, producing the following parallel algorithm:
\begin{equation}
\label{schw_sp:it3}
\left \{ 
\begin{array}{ll}
\left \{ 
\begin{array}{ll}
u_1^{(n+1,0)} =  u_1^{(n,L)}&\\
u_1^{(n+1,\ell+1)} =  \argmin_{u_1 \in V_1} \mathcal J_1^s(u_1+ u_2^{(n,M)}, u_1^{(n+1,\ell)}) & \ell=0,\dots, L-1\\
\end{array}\right. &\\
\left \{ 
\begin{array}{ll}
u_2^{(n+1,0)} =  u_2^{(n,M)}&\\
u_2^{(n+1,m+1)} = \argmin_{u_2 \in V_2} \mathcal J_2^s(u_1^{(n,L)}+ u_2, u_2^{(n+1,m)}) &m=0,\dots, M -1\\
\end{array}\right. &\\
u^{(n+1)}:=u_{1}^{(n+1,L)} + u_{2}^{(n+1,M)}.
\end{array}
\right.
\end{equation} 
Unfortunately, this modification violates the monotonicity property $\mathcal{J}(u^{(n)}) \geq \mathcal{J}(u^{(n+1)})$ and the overall algorithm does not converge in general.
In order to preserve the monotonicity of the iteration with respect to $\mathcal{J}$ a simple trick can be applied, i.e., modifying $u^{(n+1)}:=u_{1}^{(n+1,L)} + u_{2}^{(n+1,M)}$ by the average of the current iteration and the previous one. This leads to the following parallel algorithm:

\begin{equation}
\label{schw_sp:it4}
\left \{ 
\begin{array}{ll}
\left \{ 
\begin{array}{ll}
u_1^{(n+1,0)} =  u_1^{(n,L)}&\\
u_1^{(n+1,\ell+1)} =  \argmin_{u_1 \in V_1} \mathcal J_1^s(u_1+ u_2^{(n,M)}, u_1^{(n+1,\ell)}) & \ell=0,\dots, L-1\\
\end{array}\right. &\\
\left \{ 
\begin{array}{ll}
u_2^{(n+1,0)} =  u_2^{(n,M)}&\\
u_2^{(n+1,m+1)} = \argmin_{u_2 \in V_2} \mathcal J_2^s(u_1^{(n,L)}+ u_2, u_2^{(n+1,m)}) &m=0,\dots, M -1\\
\end{array}\right. &\\
u^{(n+1)}:=\frac{u_{1}^{(n+1,L)} + u_{2}^{(n+1,M)}+u^{(n)}}{2}.
\end{array}
\right.
\end{equation} 
In this section we prove similar convergence properties of this algorithm as for (\ref{schw_sp:it2}).
\begin{theorem}[Convergence properties]
\label{weak-conv2}
The algorithm in (\ref{schw_sp:it4}) produces a sequence $(u^{(n)})_{n\in \mathbb{N}}$ in  $\mathcal H^\psi$ with the following properties:
\begin{itemize}
\item[(i)] $\mathcal{J}(u^{(n)}) > \mathcal{J}(u^{(n+1)})$ for all $n \in \mathbb{N}$ (unless $u^{(n)}= u^{(n+1)}$);
\item[(ii)] $\lim_{n \to \infty} \| u^{(n+1)} -  u^{(n)}\|_{\mathcal H} =0$;
\item[(iii)] the sequence  $(u^{(n)})_{n\in \mathbb{N}}$ has subsequences which converge weakly in $\mathcal H$ and in $\mathcal H^\psi$ endowed with the topology $\tau^\psi$;
\item[(iv)] if we additionally assume that $\dim \mathcal H < \infty$, $(u^{(n_k)})_{k \in \mathbb{N}}$ is a strongly converging subsequence, and  $u^{(\infty)}$ is its limit, then $u^{(\infty)}$ is a minimizer of $\mathcal J$ whenever one of the following conditions holds
\begin{itemize}
\item[(a)] $\psi(u_1^{(\infty)} + \eta_2) + \psi(u_2^{(\infty)} + \eta_1) - \psi(u_1^{(\infty)}+ u_2^{(\infty)}) \leq \psi(\eta_1 + \eta_2)$ for all $\eta_i \in V_i$, $i=1,2$;
\item[(b)] $\psi$ is differentiable at $u^{(\infty)}$ with respect to $V_i$ for one $i \in \{1,2\}$, i.e., there exists $\frac{\partial}{\partial V_i} \psi (u^{(\infty)}):=\zeta_i \in (V_i)'$ such that 
$$
\langle \zeta_i, v_i \rangle = \lim_{t \to 0} \frac{\psi(u_1^{(\infty)}+u_2^{(\infty)} + t v_i) - \psi(u_1^{(\infty)}+u_2^{(\infty)})}{t}, \mbox{ for all } v_i \in V_i.
$$
\end{itemize}  
\end{itemize}
\end{theorem}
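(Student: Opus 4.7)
The plan is to follow the pattern established in Theorem \ref{weak-conv}, highlighting only the points where the parallel structure of (\ref{schw_sp:it4}) forces a different argument. The single genuinely new ingredient is to exploit the convexity of $\mathcal{J}$ through the averaging step, since the naive parallel sum $u_1^{(n+1,L)}+u_2^{(n+1,M)}$ need not satisfy a descent inequality against $u^{(n)}$; this is exactly why (\ref{schw_sp:it3}) fails and the averaging in (\ref{schw_sp:it4}) succeeds.

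For (i) and (ii), I would first observe that both inner loops start from the current components of $u^{(n)}$, so that the surrogate-functional estimates (\ref{decr})--(\ref{decr2}) yield
$$
\mathcal{J}\bigl(u_1^{(n+1,L)}+u_2^{(n,M)}\bigr)\le \mathcal{J}(u^{(n)})-C\sum_{\ell=0}^{L-1}\|u_1^{(n+1,\ell+1)}-u_1^{(n+1,\ell)}\|_{\mathcal{H}}^2
$$
and the symmetric estimate with the roles of $V_1,V_2$ exchanged. The key identity, which rewrites the averaging line of (\ref{schw_sp:it4}) as a convex combination, is
$$
u^{(n+1)}=\tfrac{1}{2}\bigl(u_1^{(n+1,L)}+u_2^{(n,M)}\bigr)+\tfrac{1}{2}\bigl(u_1^{(n,L)}+u_2^{(n+1,M)}\bigr).
$$
Convexity of $\mathcal{J}$ combined with the two partial-descent inequalities then gives
$$
\mathcal{J}(u^{(n+1)})\le \mathcal{J}(u^{(n)})-\frac{C}{2}\Bigl(\sum_{\ell=0}^{L-1}\|u_1^{(n+1,\ell+1)}-u_1^{(n+1,\ell)}\|_{\mathcal{H}}^2+\sum_{m=0}^{M-1}\|u_2^{(n+1,m+1)}-u_2^{(n+1,m)}\|_{\mathcal{H}}^2\Bigr),
$$
which is the analogue of (\ref{coercive}). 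Monotonicity of $(\mathcal{J}(u^{(n)}))_n$ and its lower bound $0$ yield summability of the inner increments, hence the analogue of (\ref{asymp_reg}); the $a^2+b^2\ge\tfrac12(a+b)^2$ trick and the triangle inequality then propagate this to $\|u^{(n+1)}-u^{(n)}\|_{\mathcal{H}}\to 0$ exactly as in the sequential proof.

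For (iii) and (iv), coerciveness (C) and (H1) produce, as in Theorem \ref{weak-conv}, a subsequence $(u^{(n_k)})_k$ with a common $\mathcal{H}$-weak and $\tau^\psi$-limit $u^{(\infty)}\in\mathcal{H}^\psi$, upgraded to strong convergence under $\dim\mathcal{H}<\infty$. The asymptotic regularity of the inner loops gives $\|u_i^{(n+1,L/M)}-u_i^{(n,L/M)}\|_{\mathcal{H}}\to 0$, so along the subsequence both $u_i^{(n_k,L/M)}$ and $u_i^{(n_k+1,L/M)}$ converge to the same limit $u_i^{(\infty)}:=\pi_{V_i}(u^{(\infty)})$, $i=1,2$. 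The optimality conditions now read
$$
0\in u_1^{(n+1,L)}-z_1^{(n+1)}+\alpha\,\partial_{V_1}\psi(\cdot+u_2^{(n,M)})(u_1^{(n+1,L)}),
$$
$$
0\in u_2^{(n+1,M)}-z_2^{(n+1)}+\alpha\,\partial_{V_2}\psi(\cdot+u_1^{(n,L)})(u_2^{(n+1,M)}),
$$
with $z_i^{(n+1)}$ defined as in the proof of Theorem \ref{weak-conv}; the only structural difference is that the $V_2$-condition is referenced to $u_1^{(n,L)}$ instead of $u_1^{(n+1,L)}$. Since both references tend to $u_1^{(\infty)}$, passage to the limit along $n_k$ yields the same pair of differential inclusions (\ref{dincl1})--(\ref{dincl2}) obtained in the sequential proof. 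From there, the conclusion $0\in\partial\mathcal{J}(u^{(\infty)})$ under hypothesis (a) or (b) follows by the verbatim subdifferential-calculus argument of Theorem \ref{weak-conv}: either the splitting inequality (a) directly produces the global subdifferential inclusion, or differentiability of $\psi$ at $u^{(\infty)}$ along one $V_i$ together with \cite[Corollary 10.11]{rowe98} collapses the product subdifferential onto $\partial_{\mathcal{H}}\psi(u^{(\infty)})$.

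The only step that is not entirely routine is the monotonicity in (i), where convexity of $\mathcal{J}$ is indispensable and whose failure for (\ref{schw_sp:it3}) motivated the averaging. Once monotonicity and asymptotic regularity are in place, the subsequential compactness and the identification of $u^{(\infty)}$ as a critical point go through unchanged; I do not anticipate any genuine obstacle beyond verifying that the one-step shift between $u_i^{(n,\cdot)}$ and $u_i^{(n+1,\cdot)}$ in the optimality conditions is absorbed by the asymptotic regularity of the inner loops.
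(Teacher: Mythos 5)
Your proposal follows essentially the same route as the paper's proof: the per-subspace surrogate descent estimates, the rewriting of the averaged update as the midpoint of $u_1^{(n+1,L)}+u_2^{(n,M)}$ and $u_1^{(n,L)}+u_2^{(n+1,M)}$ together with convexity of $\mathcal{J}$ (which the paper phrases via convexity of the quadratic term plus sublinearity and 1-homogeneity of $\psi$) to get the analogue of (\ref{coercive}), the $(a^2+b^2)\geq\tfrac12(a+b)^2$ argument for $\|u^{(n+1)}-u^{(n)}\|_{\mathcal H}\to 0$, and the limit passage in the optimality conditions exactly as in Theorem \ref{weak-conv}. Your explicit remark that the one-index shift ($u_1^{(n,L)}$ in place of $u_1^{(n+1,L)}$ in the second inclusion) is absorbed by asymptotic regularity is a detail the paper subsumes under ``analogous arguments,'' but it is not a different approach.
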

\begin{proof}
With the same argument as in the proof of Theorem \ref{weak-conv}, we obtain
$$
 \mathcal {J}(u^{(n)}) -  \mathcal {J}(u_1^{(n+1,L)}   + u_2^{(n)})\geq C \sum_{\ell=0}^{L-1}\| u_1^{(n+1,\ell+1)} - u_1^{(n+1,\ell)}\|_{\mathcal H}^2.
$$
and
$$
 \mathcal {J}(u^{(n)}) -  \mathcal {J}(u_1^{(n)}   + u_2^{(n+1,M)})\geq C  \sum_{m=0}^{M-1}\| u_2^{(n+1,m+1)} - u_2^{(n+1,m)}\|_{\mathcal H}^2.
$$
Hence, by summing and halving  
\begin{eqnarray*}
&& \mathcal {J}(u^{(n)}) -  \frac{1}{2}( \mathcal {J}(u_1^{(n+1,L)}   + u_2^{(n)}) + \mathcal {J}(u_1^{(n)}   + u_2^{(n+1,M)})) \\
&\geq& \frac{C}{2} \left ( \sum_{\ell=0}^{L-1}\| u_1^{(n+1,\ell+1)} - u_1^{(n+1,\ell)}\|_{\mathcal H}^2 + \sum_{m=0}^{M-1}\| u_2^{(n+1,m+1)} - u_2^{(n+1,m)}\|_{\mathcal H}^2 \right ).
\end{eqnarray*}
By convexity we have
\begin{eqnarray*}
\left \| T u^{(n+1)}- g \right\|^2_{\mathcal H} &=& \left \| T \left (\frac{(u_1^{(n+1,L)} + u_2^{(n+1,M)}) + u^{(n)}}{2} \right ) - g \right\|^2_{\mathcal H} \\
&\leq & \frac{1}{2} \| T(u^{(n+1,L)}_1 +  u^{(n)}_2 ) -g \|^2_{\mathcal H} + \frac{1}{2} \| T(u^{(n)}_1 + u^{(n+1,M)}_2 ) -g \|^2_{\mathcal H}.
\end{eqnarray*}
Moreover, by sublinearity ($\Psi$2) and 1-homogeneity  ($\Psi$3) we have
\begin{eqnarray*}
\psi(u^{(n+1)}) &\leq& \frac{1}{2} \left ( \psi(u_1^{(n+1,L)} + u_2^{(n)}) + \psi(  u^{(n)}_1 + u_2^{(n+1,M)}) \right )
\end{eqnarray*}
By the last two inequalities we immediately show that
$$
\mathcal  J (u^{(n+1)}) \leq \frac{1}{2} \left (\mathcal {J}(u_1^{(n+1,L)}   + u_2^{(n)})+ \mathcal {J}(u^{(n)}_1   + u_2^{(n+1,M)}) \right ),
$$
hence
$$
\mathcal {J}(u^{(n)})- \mathcal  J (u^{(n+1)})$$\begin{equation}
\label{coerc3}\geq \frac{C}{2} \left( \sum_{\ell=0}^{L-1}\| u_1^{(n+1,\ell+1)} - u_1^{(n+1,\ell)}\|_{\mathcal H}^2 +\sum_{\ell=0}^{M-1}\| u_2^{(n+1,\ell+1)} - u_2^{(n+1,\ell)}\|_{\mathcal H}^2 \right) \geq 0.
\end{equation}
Since the sequence $(\mathcal {J}(u^{(n)}))_{n \in \mathbb{N}}$ is monotonically decreasing and bounded from below by 0, it is also convergent. From (\ref{coerc3}) and the latter convergence we deduce 
\begin{equation}
\label{asymp_reg3}
\left ( \sum_{\ell=0}^{L-1}\| u_1^{(n+1,\ell+1)} - u_1^{(n+1,\ell)}\|_{\mathcal H}^2 + \sum_{m=0}^{M-1}\| u_2^{(n+1,m+1)} - u_2^{(n+1,m)}\|_{\mathcal H}^2 \right ) \rightarrow 0, \quad n \to \infty.
\end{equation}
In particular, by the standard inequality $(a^2+b^2) \geq \frac{1}{2} (a+b)^2$ for $a,b>0$ and the triangle inequality, we have also
\begin{eqnarray*}
\sum_{\ell=0}^{L-1}\| u_1^{(n+1,\ell+1)} - u_1^{(n+1,\ell)}\|_{\mathcal H}^2 &\geq& C'' \left ( \sum_{\ell=0}^{L-1}\| u_1^{(n+1,\ell+1)} - u_1^{(n+1,\ell)}\|_{\mathcal H} \right )^2\\
&\geq & C''\| u^{(n+1,L)}_1 - u^{(n)}_1\|^2_{\mathcal H} \\
&=&  C''\|u^{(n+1,L)}_1 +u^{(n)}_1 - 2 u^{(n)}_1\|^2_{\mathcal H}.
\end{eqnarray*}
Analogously we have
\begin{eqnarray*}
\sum_{\ell=0}^{M-1}\| u_2^{(n+1,\ell+1)} - u_2^{(n+1,\ell)}\|_{\mathcal H}^2 &\geq& C''\| u_2^{(n+1,M)} +u_2^{(n)} - 2 u_2^{(n)}\|^2_{\mathcal H}.
\end{eqnarray*}
By denoting $C''=\frac{1}{2} C'''$ we obtain
\begin{eqnarray*}
&&\frac{C}{2} \left( \sum_{\ell=0}^{L-1}\| u_1^{(n+1,\ell+1)} - u_1^{(n+1,\ell)}\|_{\mathcal H}^2 +\sum_{\ell=0}^{M-1}\| u_2^{(n+1,\ell+1)} - u_2^{(n+1,\ell)}\|_{\mathcal H}^2 \right) \\&\geq & \frac{C C'''}{4} \|  u_1^{(n+1,L)} +  u_2^{(n+1,M)} + u^{(n)} - 2  u^{(n)}\|^2_{\mathcal H} \\
&\geq & C C''' \| u^{(n+1)} - u^{(n)}\|^2_{\mathcal H}. 
\end{eqnarray*}
Therefore, we finally have
\begin{equation}
\label{asymp_reg4}
\| u^{(n)} -  u^{(n+1)} \|_{\mathcal H} \rightarrow 0, \quad n\to \infty.
\end{equation}
The rest of the proof follows analogous arguments as in that of Theorem \ref{weak-conv}.
\end{proof}

\section{Applications and Numerics}

In this section we present two nontrivial applications of the theory and algorithms illustrated in the previous sections to Examples \ref{ex1}.

\subsection{Domain decomposition methods for total variation minimization}
\label{domdec}
In the following we consider the minimization of the functional $\mathcal J$ in the setting of Examples \ref{ex1}.1. Namely, let $\Omega\subset\mathbb{R}^d$, for $d=1,2$, be a bounded open set with Lipschitz boundary. We are interested in the case when $\mathcal{H}=L^2(\Omega)$, $\mathcal{H}^\psi=BV(\Omega)$ and $\psi(u)=V(u,\Omega)$. Then the domain decomposition $\Omega=\Omega_1\cup\Omega_2$ as described in Examples \ref{ex2}.1 induces the space splitting into $V_i :=\{ u \in L^2(\Omega) : \textrm{supp}(u) \subset \Omega_i \},$ and $V_i^\psi=BV(\Omega)\cap V_i, \quad i=1,2$. In particular, we can consider multiple subspaces, since the algorithms and their analysis presented in the previous sections can be easily generalized to these cases, see \cite[Section 6]{fo07}. As before $u_{\Omega_i}=\pi_{V_i}(u) = 1_{\Omega_i} u$ is the orthogonal projection onto $V_i$.
\\
To exemplify the kind of difficulties one may encounter in the numerical treatment of the interfaces $\partial \Omega_i \cap \partial \Omega_j$, we present first an approach based on the direct discretization of the subdifferential of $\mathcal J$ in this setting.
We show that this method can work properly in many cases, but it fails in others, even in simple 1D examples, due to the raising of exceptions which cannot be captured by this formulation. Instead of insisting on dealing with these exceptions and strengthening the formulation, we show then that the general theory and algorithms  previously presented work properly and deal well with interfaces both for $d=1,2$.


\subsubsection{The ``naive'' direct approach}

In light of (\ref{amb}), the first subiteration in \eqref{schw_sp} is given by
$$
u_1^{(n+1)} \approx \textrm{argmin}_{v_1 \in V_1}  \mathcal  \| T (v_1 +u_2^{(n)}) - g \|_{ L^2(\Omega)}^2 + 2 \alpha \left (|D(v_1)|(\Omega_1)+ \int_{\partial\Omega_1\cap\partial\Omega_2} \left|v_1^+-u_2^{(n)-}\right|\; d \mathcal H_{d-1} \right ).
$$
We would like to dispose of conditions to characterize subdifferentials of functionals of the type
\begin{eqnarray*}
{\Gamma}(u) =  \left|D(u)\right|(\Omega) + \underbrace{\int_\theta \left|u^{+}-z\right|\; d \mathcal H_{d-1}}_{\mbox{interface condition}},
\end{eqnarray*}
where $\theta\subset\partial\Omega$, in order to handle the boundary conditions that are imposed at the interface.
Since we are interested in emphasizing the difficulties of this approach, we do not insist on the details of the rigorous derivation of these conditions, and we limit ourself to mention the main facts.\\
It is well known \cite[Proposition 4.1]{Ve01} that, if no interface condition is present, $\xi \in \partial | D (\cdot) |(\Omega) (u)$ implies
\[\left\{\begin{array}{l l}
\xi = -\nabla\cdot(\frac{\nabla u}{|\nabla u|}) & \textrm{ in }\Omega\\
\frac{\nabla u}{|\nabla u|} \cdot \nu = 0 & \textrm{ on }\partial\Omega.\\
\end{array}\right.\]
The previous conditions do not fully characterize $\xi \in \partial | D (\cdot) |(\Omega) (u)$, additional conditions would be required \cite{AK02, Ve01}, but the latter are, unfortunately, hardly numerically implementable. This lacking approach is the source of the failures of this direct method.  The presence of the interface further modifies and deteriorates this situation and for $\partial \Gamma (u) \neq \emptyset$ we need to enforce
\begin{eqnarray*}
\int_{\partial\Omega} \frac{\nabla u}{|\nabla u|} \cdot \nu w\; d\mathcal H_{d-1} + \lim_{s\rightarrow 0}\left (\int_\theta \frac{|(u+ws)^+-z|-|u^+-z|}{s} \; d\mathcal H_{d-1} \right ) \geq 0, \;\;\forall w\in C^\infty(\bar{\Omega}).
\end{eqnarray*}
The latter condition is implied by the following natural boundary conditions:

\begin{equation}
\label{inclinter}
\left\{\begin{array}{l l}
\frac{\nabla u}{|\nabla u|} \cdot \nu = 0 & \textrm{ on }\partial\Omega\setminus\theta,\\
-\frac{\nabla u}{|\nabla u|} \cdot \nu\in \partial | \cdot | (u^+-z) & \textrm{ on }\theta.
\end{array}\right.
\end{equation}

Note that the conditions above are again not sufficient to characterize elements in the subdifferential of $\Gamma$.

\subsubsection{Implementation of the subdifferential approach in $TV-L^2$ interpolation}
Let  $D\subset\Omega \subset\mathbb{R}^d$ open and bounded 
domains with Lipschitz boundaries. We assume that a function $g \in L^2(\Omega)$ is given only on $\Omega \setminus D$, possibly with noise disturbance. The problem is to reconstruct a function $u$ in the damaged domain $D\subset\Omega$ which nearly coincides with $g$ on $\Omega \setminus D$. In 1D this is a classical interpolation problem, in 2D has taken the name of ``inpainting'' due to its applications in image restoration.
 $TV$-interpolation/inpainting with $L^2$ fidelity is solved by minimization of the functional
\begin{eqnarray}
\label{tvinp}
J(u) = \left\|1_{\Omega\setminus D}(u-g)\right\|^2_{L^2(\Omega)} + 2 \alpha |D(u)|(\Omega),
\end{eqnarray}
where $1_{\Omega\setminus D}$ denotes the characteristic function of $\Omega\setminus D$. Hence, in this case $T$ is the multiplier operator $T u = 1_{\Omega\setminus D} u$.
We consider in the following the problem for $d=1$ so that $\Omega=(a,b)$ is an interval. We may want to minimize  (\ref{tvinp}) iteratively by a subgradient descent method,
\begin{eqnarray}\label{globprob}
\begin{array}{l l}
\frac{u^{(n+1)}-u^{(n)}}{\tau} = -\nabla\cdot (\frac{\nabla u^{(n+1)}}{|\nabla u^{(n)}|}) + 2\lambda(u^{(n)}-g) & \textrm{ in }\Omega\\
\frac{1}{|\nabla u^{(n)}|} \frac{\partial u^{(n+1)}}{\partial n} = 0 & \textrm{ on }\partial\Omega,
\end{array}
\end{eqnarray}
where
\begin{eqnarray*}
\lambda(x) = \begin{cases}
\lambda_0  = \frac{1}{4 \alpha}& \Omega\setminus{D} \\
0 & D.
\end{cases}
\end{eqnarray*}
We can also attempt the minimization by the following domain decomposition algorithm: We split $\Omega$ into two intervals $\Omega=\Omega_1\cup\Omega_2$ and define two alternating minimizations on $\Omega_1$ and $\Omega_2$ with interface $\theta=\partial \Omega_1\cap \partial \Omega_2$
\[\begin{array}{l l}
\frac{u_1^{(n+1)}-u_1^{(n)}}{\tau} = -\nabla\cdot (\frac{\nabla u_1^{(n+1)}}{|\nabla u_1^{(n)}|}) + 2\lambda_1(u_1^{(n)}-g) & \textrm{ in }\Omega_1\\
\frac{1}{|\nabla u_1^{(n)}|} \frac{\partial u_1^{(n+1)}}{\partial n} = 0 & \textrm{ on }\partial\Omega_1\setminus\theta\\
-\frac{1}{|\nabla u_1^{(n)}|} \frac{\partial u_1^{(n+1)}}{\partial n} \in \partial \left |\cdot \right| (u_1^{(n+1)+}-u_2^{(n)-}) & \textrm{ on }\theta,
\end{array}\]
and
\[\begin{array}{l l}
\frac{u_2^{(n+1)}-u_2^{(n)}}{\tau} = -\nabla\cdot (\frac{\nabla u_2^{(n+1)}}{|\nabla u_2^{(n)}|}) + 2\lambda_2(u_2^{(n)}-g) & \textrm{ in }\Omega_2\\
\frac{1}{|\nabla u_2^{(n)}|} \frac{\partial u_2^{(n+1)}}{\partial n} = 0 & \textrm{ on }\partial\Omega_2\setminus\theta\\
\frac{1}{|\nabla u_2^{(n)}|} \frac{\partial u_2^{(n+1)}}{\partial n} \in \partial \left |\cdot \right | (u_2^{(n+1)+}-u_1^{(n+1)-}) & \textrm{ on }\theta.
\end{array}\]
In this setting $u_i$ denotes the restriction of $u\in BV(\Omega)$ to $\Omega_i$. The fitting parameter $\lambda$ is also spÃlit accordingly into $\lambda_1$ and $\lambda_2$ on $\Omega_1$ and $\Omega_2$ respectively. Note that we enforced the interface conditions (\ref{inclinter}), with the hope to match correctly the solution at the internal boundaries.
\\

The discretization in space is done by finite differences. We only explain the details for the first subproblem on $\Omega_1$ because the procedure is analogous for the second one. Let $i=1,\dots,N$ denote the space nodes supported in  $\Omega_1$. We denote $h= \frac{b-a}{N}$ and $u(i) := u(i\cdot h)$. The gradient and the divergence operator are discretized by backward differences and forward differences respectively,
\begin{eqnarray*}
& & \nabla u(i) = \frac{1}{h}(u(i)-u(i-1))\\
& & \nabla \cdot u(i) = \frac{1}{h}(u(i+1)-u(i)) \\
& & |\nabla u|(i) = \sqrt{\epsilon^2 + \frac{1}{h^2}(u(i)-u(i-1))^2},
\end{eqnarray*}
for $i=2,\ldots, N-1$. The discretized equation on $\Omega_1$ turns out to be
\begin{eqnarray*}
u_1^{(n+1)}(i) &=& u_1^{(n)}(i) + 2\tau\lambda(i)(u_1^{(n)}(i)-g(i)) + \frac{\tau}{h^2}\left (\frac{u_1^{(n+1)}(i+1)-u_1^{(n+1)}(i)}{c_1^n(i+1)} \right .\\
& & \left .- \frac{u_1^{(n+1)}(i)-u_1^{(n+1)}(i-1)}{c_1^n(i)} \right ),
\end{eqnarray*}
with $c_1^n(i) = \sqrt{\epsilon^2+(u_1^{(n)}(i)-u_1^{(n)}(i-1))^2/h^2}$ and $i=2,\ldots,N-1$. The Neumann boundary conditions on the external portion of the boundary are enforced by 
\begin{eqnarray*}\frac{1}{c_1^n(1)}u_1(1)=\frac{1}{c_2^n(2)}u_1(2).
\end{eqnarray*}
The interface conditions on the internal boundaries are computed by solving the following subdifferential inclusion
\begin{eqnarray*}
-(u_1^{(n+1)}(N)-u_1^{(n+1)}(N-1))\in c_1^n(N)\; h \cdot\partial |\cdot | (u_2^{(n)}(N)-u_1^{(n+1)}(N)).
\end{eqnarray*}
For the solution of this subdifferential inclusion we recall that the soft-thresholded $u=S_\alpha(x)$ \eqref{softthr} provides the unique solution of the subdifferential inclusion $0\in (u-x) + \alpha \partial |\cdot|(u)$. We reformulate our subdifferential inclusion as 
\begin{eqnarray*}
0\in \left[v-(u_2^{(n)}(N)-u_1^{(n+1)}(N-1))\right] +  c_1^n(N) h \cdot\partial |\cdot| (v),
\end{eqnarray*}
with $v:=u_2^{(n)}(N)-u_1^{(n+1)}(N)$ and get
\begin{eqnarray*}
v=S_{c_1^n(N) h}(u_2^{(n)}(N)-u_1^{(n+1)}(N-1)).
\end{eqnarray*}
Therefore the interface condition on $\theta$ reads as $u_1^{(n+1)}(N) = u_2^{(n)}(N)-v$.
\\

In the left column of Figure \ref{fignew} three one dimensional signals are considered. The right column shows the result of the application of the domain decomposition method for total variation minimization described above. The support of the signals is split in two intervals. The interface developed by the two intervals is marked by a red dot. In all three examples we fixed $\lambda_0=1$ and $\tau=1/2$. The first example \ref{l1}-\ref{l2} shows a step function which has its step directly at the interface of the two intervals. The total variation minimization \eqref{globprob} is applied with $D=\emptyset$. This example confirms that jumps are preserved at the interface of the two domains. The second and third example \ref{l3}-\ref{l6} present the behaviour of the algorithm when interpolation across the interface is performed, i.e., $D \neq \emptyset$. In the example \ref{l3}-\ref{l4} the computation at the interface is correctly performed. But the computation at the interface clearly fails in the last example \ref{l5}-\ref{l6}, compare the following remark.

\begin{figure}
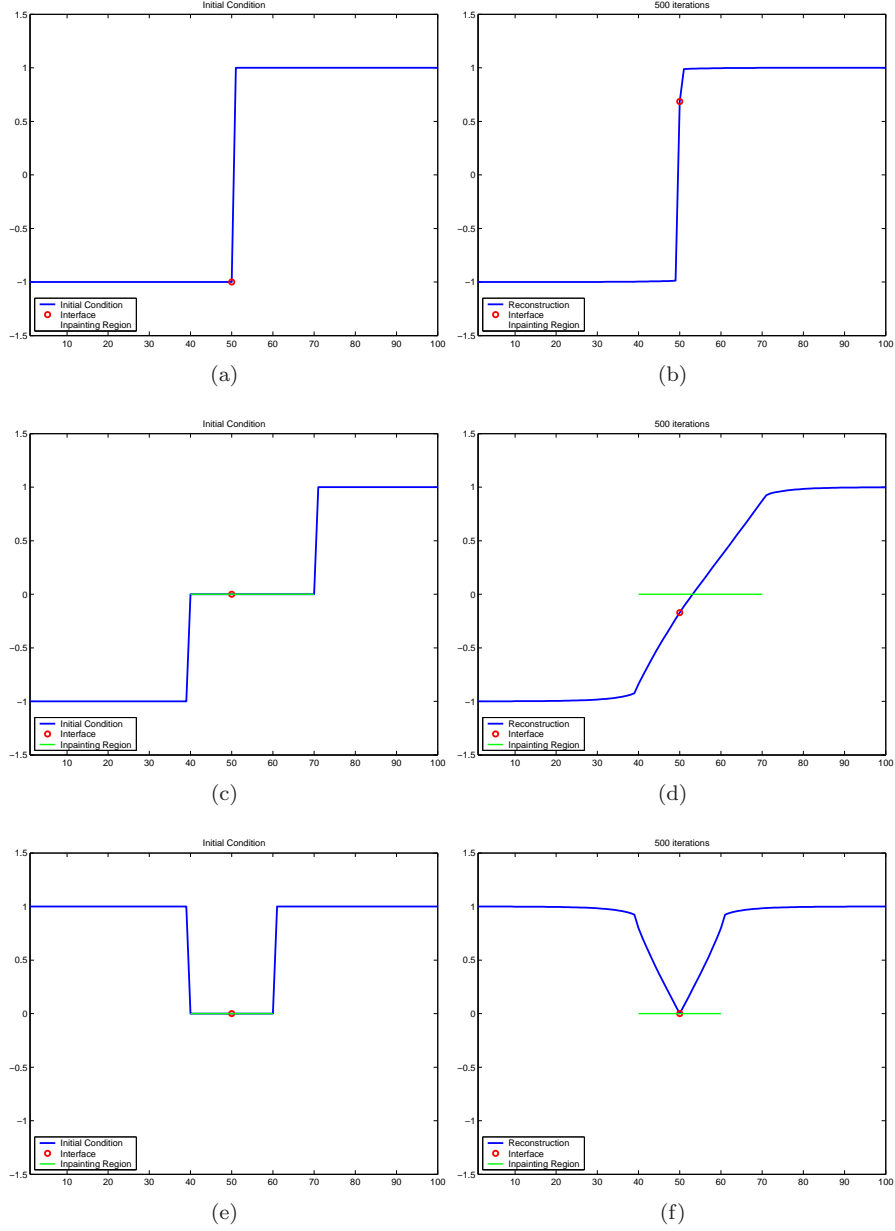

\begin{center}
\graphicspath{{./graphics/g_domaindecompeta_1D_0.01ep1dx0.5dt1lambda1eta500iter/}}
    \subfigure[]{\label{l1}\includegraphics[height=4.7cm]{g.eps}}
    \subfigure[]{\label{l2}\includegraphics[height=4.7cm]{u500.eps}}\\
\graphicspath{{./graphics/g2_domaindecompeta_1D_0.01ep1dx0.5dt1lambda1eta500iter/}}    
    \subfigure[]{\label{l3}\includegraphics[height=4.7cm]{g.eps}}
    \subfigure[]{\label{l4}\includegraphics[height=4.7cm]{u500.eps}}\\
\graphicspath{{./graphics/g3_domaindecompeta_1D_0.01ep1dx0.5dt1lambda1eta500iter/}}    
    \subfigure[]{\label{l5}\includegraphics[height=4.7cm]{g.eps}}
    \subfigure[]{\label{l6}\includegraphics[height=4.7cm]{u500.eps}}    
\end{center}    
\caption{Examples of $TV-L^2$ inpainting in 1D where the domain was split in two. (a)-(f): $\lambda=1$ and $\tau=1/2$}
\label{fignew}
\end{figure}

\begin{remark}\label{rmkfail}
Evaluating the soft thresholding operator at $u_2^{(n)}(N)-u_1^{(n+1)}(N-1)$ implies that we are treating  implicitly the computation of $u_1^{(n+1)}$ at the interface. Namely the interface condition can be read as
\begin{eqnarray*}
u_1^{(n+1)}(N) = & & u_2^{(n)}(N) - \Theta^{(n+1)} \cdot [u_2^{(n)}(N)-u_1^{(n+1)}(N-1) \\
& & - \sgn(u_2^{(n)}(N)-u_1^{(n+1)}(N-1))\cdot c_1^{(n)}(N) h],
\end{eqnarray*}
where 
$$
\Theta^{(n+1)} = \left \{
\begin{array}{ll} 
1,& |u_2^{(n)}(N)-u_1^{(n+1)}(N-1)|-c_1^{(n)}(N) h >0\\
0,& \mbox{ otherwise }.
\end{array}
\right .
$$ 
The solution of the implicit problem is not immediate and one may prefer to modify the situation in order to obtain  an explicit formulation by computing $S_{c_1^n(N) h}(u_2^{(n)}(N)-u_1^{(n)}(N-1))$ instead of $S_{c_1^n(N) h}(u_2^{(n)}(N)-u_1^{(n+1)}(N-1))$. The problem here is that, with this discretization, we cannot capture differences in the steepness of $u_1$ and $u_2$ at the interface because $u_1^{(n)}(N)=u_2^{(n)}(N)$ for all n. Indeed the condition $|u_2^{(n)}(N)-u_1^{(n)}(N-1)|-c_1^{(n)}(N) h >0$ is never satisfied and the interface becomes always a Dirichlet boundary condition. Even if we change the computation of $c_1^{(n)}(N)$ from $\sqrt{\epsilon^2+(u_1^{(n)}(N)-u_1^{(n)}(N-1))^2/h^2}$ to a forward difference $\sqrt{\epsilon^2+(u_1^{(n)}(N+1)-u_1^{(n)}(N))^2/h^2}$ (as it is indeed done in the numerical examples presented in Figure (\ref{fignew})) the method fails when the gradients are equal in absolute value on the left and the right side of the interface.
\end{remark}

We do not insist on trying to capture heuristically all the possible exceptions. We can expect that this approach to the problem may become even more deficient and more complicated to handle in 2D.
Instead, we want to apply the theory of the previous sections which allows to deal with the problem in a transparent way.

\subsubsection{The novel approach based on subspace corrections and oblique thresholding}
We want to implement the algorithm (\ref{schw_sp:it2}) for the minimization of $\mathcal J$. To solve its subiterations we compute the minimizer by means of oblique thresholding. Denote $u_2=u_2^{(n,M)}$, $u_1=u_1^{(n+1,\ell+1)}$, and $z=u_1^{(n+1,\ell)}+\pi_{V_1}T^*(g-Tu_2-T u_1^{(n+1,\ell)})$. We would like to compute the minimizer
$$
u_1 = \textrm{argmin}_{u \in V_1} \| u -z\|_{L^2(\Omega)}^2 + 2 \alpha |D(u+u_2)|(\Omega)
$$
by
$$
u_1 = (I- P_{\alpha K_{|D(\cdot)|(\Omega)}})(z+ u_2 -\eta)-u_2=\mathbb S_\alpha^{|D(\cdot)|(\Omega)}(z+ u_2 -\eta) -u_2,
$$
for any $\eta \in V_2$. It is known \cite{Ch} that $K_{|D(\cdot)|(\Omega)}$ is the closure of the set
$$
\left\{\textrm{div } \xi:\xi\in \left[C_c^1(\Omega)\right]^d, \left|\xi(x)\right|\leq 1 \quad\forall x\in\Omega\right\}.
$$
The element $\eta\in V_2$ is a limit of the corresponding fixed point iteration (\ref{fixptit}).
\\

In order to guarantee the concrete computability and the correctness of this procedure, we need to discretize the problem and approximate it in finite dimensions, compare Examples \ref{ex1}.3 and Remark \ref{rem1}.2.

In contrast to the approach of the previous section, where we used the discretization of the subdifferential to solve the subiterations, in the following we directly work with discrete approximations of the functional $\mathcal J$. In dimension $d=1$ we  consider vectors $u \in \mathcal H:= \mathbb{R}^N$, $u=(u_1,u_2,\ldots,u_N)$ with gradient $u_x\in \mathbb{R}^N$ given by
$$
(u_x)_i = \begin{cases}
u_{i+1}-u_i & \textrm{ if } i<N\\
0 & \textrm{ if } i=N,
\end{cases}
$$
for $i=1,\ldots,N$. In this setting, instead of minimizing 
$$
\mathcal J(u) := \| T u - g \|_{L^2{\Omega}}^2 + 2 \alpha |D(u)|(\Omega),
$$
we consider the discretized functional
$$
\mathcal J^\delta(u) := \sum_{1\leq i\leq N} \left(((Tu)_i-g_i)^2 + 2 \alpha |(u_x)_i|\right).
$$
To give a meaning to $(T u)_i$ we assume that $T$ is applied on the piecewise linear interpolant $\hat u$ of the vector $(u_i)_{i=1}^N$ (we will assume similarly for $d=2$).

In dimension $d=2$, the continuous image domain $\Omega=[a,b]\times[c,d]\subset\mathbb{R}^2$ is approximated by a finite grid $\left\{a=x_1<\ldots<x_N=b\right\}\times \left\{c=y_1<\ldots<x_M=d\right\}$ with equidistant step-size $h=x_{i+1}-x_i=\frac{b-a}{N}=\frac{d-c}{M}=y_{j+1}-y_j$ equal to $1$ (one pixel). The digital image $u$ is an element in $\mathcal H:=\mathbb{R}^{N\times M}$. We denote $u(x_i,y_j) = u_{i,j}$ for $i=1,\ldots,N$ and $j=1,\ldots, M$. The gradient $\nabla u$ is a vector in $\mathcal H\times \mathcal H$ given by forward differences
\begin{eqnarray*}
(\nabla u)_{i,j}=((\nabla_xu)_{i,j},(\nabla_yu)_{i,j}),
\end{eqnarray*}
with
\begin{eqnarray*}
& & (\nabla_x u)_{i,j} = \begin{cases}
u_{i+1,j}-u_{i,j} & \textrm{ if } i<N\\
0 & \textrm{ if } i=N,
\end{cases}\\
& & (\nabla_y u)_{i,j} = \begin{cases}
u_{i,j+1}-u_{i,j} & \textrm{ if } j<M\\
0 & \textrm{ if } j=M,
\end{cases}
\end{eqnarray*}
for $i=1,\ldots,N$, $j=1,\ldots, M$. The discretized functional in two dimensions is given by
$$
\mathcal J^\delta(u) := \sum_{1\leq i,j\leq N} \left(((Tu)_{i,j}-g_{i,j})^2 + 2 \alpha |(\nabla u)_{i,j}|\right),
$$
with $\left|y\right|=\sqrt{y_1^2+y_2^2}$ for every $y=(y_1,y_2)\in\mathbb{R}^2$.

For the definition of the set $K_{|D(\cdot)|(\Omega)}$ in finite dimensions we further introduce a discrete divergence in one dimension $\nabla\cdot : \mathcal H\rightarrow \mathcal H$ (resp. $\nabla\cdot : \mathcal H\times \mathcal H\rightarrow \mathcal H$ in two dimensions) defined, by analogy with the continuous setting, by $\nabla\cdot = -\nabla^*$ ($\nabla^*$ is the adjoint of the gradient $\nabla$). That is, the discrete divergence operator is given by backward differences, in one dimension by
$$
(\nabla\cdot p)_i = \begin{cases}
p_i-p_{i-1} & \textrm{ if } 1<i<N\\
p_i & \textrm{ if } i=1\\
-p_{i-1} & \textrm{ if } i=N,
\end{cases}
$$
and, respectively, in two dimensions by
\begin{eqnarray*}
(\nabla\cdot p)_{ij} &=& \begin{cases}
(p^x)_{i,j}-(p^x)_{i-1,j} & \textrm{ if } 1<i<N\\
(p^x)_{i,j} & \textrm{ if } i=1\\
-(p^x)_{i-1,j} & \textrm{ if } i=N
\end{cases}
\\
& & + 
\begin{cases}
(p^y)_{i,j}-(p^y)_{i,j-1} & \textrm{ if } 1<j<M\\
(p^y)_{i,j} & \textrm{ if } j=1\\
-(p^y)_{i,j-1} & \textrm{ if } j=M,
\end{cases}
\end{eqnarray*}
for every $p=(p^x,p^y)\in \mathcal H\times \mathcal H$.

With these definitions the set $K_{\|(\cdot)_x\|_{\ell_1^N}}$ in one dimension is given by 
\begin{eqnarray*}
\left\{\nabla\cdot p: p\in \mathcal H, \left|p_i\right|\leq 1\;\forall i=1,\ldots, N\right\}.
\end{eqnarray*}
and in two dimensions $K_{\|\nabla(\cdot)\|_{\ell_1^{N\times M}}}$ is given by
\begin{eqnarray*}
\left\{\nabla\cdot p: p\in \mathcal H\times \mathcal H, \left|p_{i,j}\right|\leq 1\;\forall i=1,\ldots, N\textrm{ and } j=1,\ldots,M \right\}.
\end{eqnarray*}

To highlight the relationship between the continuous and discrete setting we introduce a step-size $h\sim 1/N$ in 1D ($h\sim \min\{1/N,1/M\}$ in 2D respectively) in the discrete definition of $\mathcal{J}$ by defining a new functional $\mathcal J_h^\delta$ equal to $h$ times the expression $\mathcal J^\delta$ above. One can show that as $h\rightarrow 0$, $\mathcal J_h^\delta$ $\Gamma-$ converges to the continuous functional $\mathcal J$, see \cite{Ch}. In particular, piecewise linear interpolants $\hat u_h$ of the minimizers of the discrete functional $\mathcal J_h^\delta$ do converge to minimizers of $\mathcal J$. This observation clearly justifies our discretization approach.

For the computation of the projection in the oblique thresholding we can use an algorithm proposed by Chambolle in \cite{Ch}.  In two dimensions the following semi-implicit gradient descent algorithm is given to approximate $P_{\alpha K_{|\nabla (\cdot)|(\Omega)}}(g)$:
\begin{quote} Choose $\tau>0$, let $p^{(0)}=0$ and, for any $n\geq 0$, iterate
\begin{eqnarray*}
p_{i,j}^{(n+1)} = p_{i,j}^{(n)} + \tau \left((\nabla(\nabla\cdot p^{(n)}-g/\alpha))_{i,j} - \left|(\nabla(\nabla\cdot p^{(n)}-g/\alpha))_{i,j}\right|p_{i,j}^{(n+1)}\right),
\end{eqnarray*}
so that
\begin{eqnarray}\label{chprojit}
p_{i,j}^{(n+1)} = \frac{p_{i,j}^{(n)} + \tau (\nabla(\nabla\cdot p^{(n)}-g/\alpha))_{i,j}}{1+\tau \left|(\nabla(\nabla\cdot p^{(n)}-g/\alpha))_{i,j}\right|}.
\end{eqnarray}\end{quote}

For $\tau\leq 1/8$ the iteration $\alpha\nabla\cdot p^{(n)}$ converges to $P_{\alpha K_{|\nabla (\cdot )|(\Omega)}}(g)$ as $n\rightarrow\infty$ (compare \cite[Theorem 3.1]{Ch}). 

For $d=1$  a similar algorithm is given:
\begin{quote}We choose $\tau>0$, let $p^{(0)}=0$ and for any $n\geq 0$,
\begin{eqnarray}\label{chprojit1}
p_i^{(n+1)} = \frac{p_i^{(n)} + \tau ((\nabla\cdot p^{(n)}-g/\alpha)_x)_i}{1+\tau \left|((\nabla\cdot p^{(n)}-g/\alpha)_x)_i\right|}.
\end{eqnarray}\end{quote}
In this case the convergence of $\alpha\nabla\cdot p^{(n)}$ to the corresponding projection as $n\rightarrow\infty$  is guaranteed for $\tau\leq 1/4$.

\subsubsection{Domain decompositions}
\label{tricks}

In one dimension the domain $\Omega=[a,b]$ is split into two intervals $\Omega_1=[a,\left\lceil \frac{N}{2}\right\rceil]$ and $\Omega_2=[\left\lceil \frac{N}{2}\right\rceil+1,b]$. The interface $\partial\Omega_1\cap\partial\Omega_2$ is located between $i=\left\lceil N/2\right\rceil$ in $\Omega_1$ and $i=\left\lceil N/2\right\rceil+1$ in $\Omega_2$. In two dimensions the domain $\Omega=[a,b]\times[c,d]$ is split in an analogous way with respect to its rows. In particular we have $\Omega_1=[a,\left\lceil \frac{N}{2}\right\rceil]\times[c,d]$ and $\Omega_2=[\left\lceil \frac{N}{2}\right\rceil+1,b]\times[c,d]$, compare Figure \ref{figill}. The splitting in more than two domains is done similarly: 
\begin{quote}
Set $\Omega=\Omega_1\cup\ldots\cup\Omega_\mathcal N$, the domain $\Omega$ decomposed into $\mathcal N$ disjoint domains $\Omega_i$, $i=1,\ldots, \mathcal N$. Set $s=\left\lceil N/\mathcal N\right\rceil$. Then
\begin{eqnarray*}
& & \Omega_1=[1,s]\times[c,d]\\
& & \textrm{for } i=2:\mathcal N-1\\
& & \quad\Omega_i=[(i-1)s+1,is]\times[c,d]\\
& & \textrm{end}\\
& & \Omega_\mathcal N=[(\mathcal N-1)s+1,N]\times[c,d].
\end{eqnarray*}
\end{quote}

\begin{center}
\begin{figure}[h!]
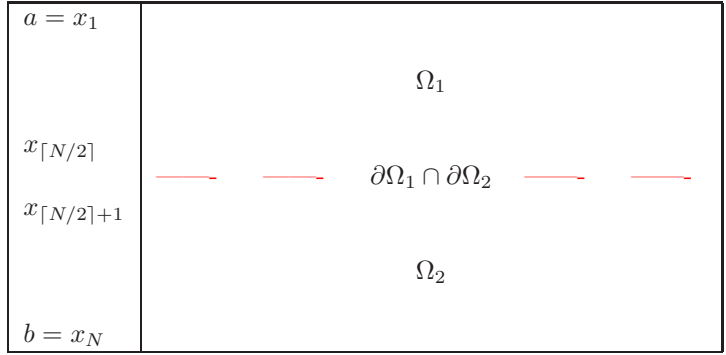
\begin{center}
\begin{tabular}{|l| p{1 cm} p{1 cm} c p{1 cm} p{1 cm} |}
\hline
$a=x_1$ & & & & &\\
 & & & & &\\
 & & & $\Omega_1$ & &\\
 & & & & &\\
$x_{\left\lceil N/2\right\rceil}$ & & & & &\\
 &\textcolor{red}{ ------- }&\textcolor{red}{ ------- }& $\partial\Omega_1\cap\partial\Omega_2$ &\textcolor{red}{ ------- }&\textcolor{red}{ ------- }\\
$x_{\left\lceil N/2\right\rceil+1}$ & & & & &\\ 
 & & & & &\\
 & & & $\Omega_2$ & &\\
 & & & & &\\
$b=x_N$ & & & & &\\
\hline
\end{tabular}\end{center}
\caption{Decomposition of the discrete image in two domains $\Omega_1$ and $\Omega_2$ with interface $\partial\Omega_1\cap\partial\Omega_2$}
\label{figill}
\end{figure}
\end{center}

To compute the fixed point $\eta$ of \eqref{fixpt} in an efficient way we make the following considerations, which allow to restrict the computation to a relatively small stripe around the interface.
For $u_2 \in V_2^\psi$ and $z\in V_1$ a minimizer $u_1$ is given by
$$
u_1 = \textrm{argmin}_{u \in V_1} \| u -z\|_{L^2(\Omega)}^2 + 2 \alpha |D(u+u_2)|(\Omega).
$$
We further decompose  $\Omega_2=\hat{\Omega}_2\cup(\Omega_2\setminus\hat{\Omega}_2)$ with $\partial\hat{\Omega}_2\cap\partial\Omega_1=\partial\Omega_2\cap\partial\Omega_1$, where  $\hat{\Omega}_2\subset\Omega_2$ is a neighborhood stripe around the interface $\partial {\Omega}_2\cap\partial\Omega_1$, as illustrated in  Figure \ref{figill2}. By using the splitting of the total variation \eqref{amb} we can restrict the problem to  an equivalent minimization where the total variation is only computed in $\Omega_1\cup\hat{\Omega}_2$. Namely, we have
$$
u_1 = \textrm{argmin}_{u \in V_1} \| u -z\|_{L^2(\Omega)}^2 + 2 \alpha |D(u+u_2)|(\Omega_1\cup\hat{\Omega}_2).
$$

\begin{center}
\begin{figure}[h!]
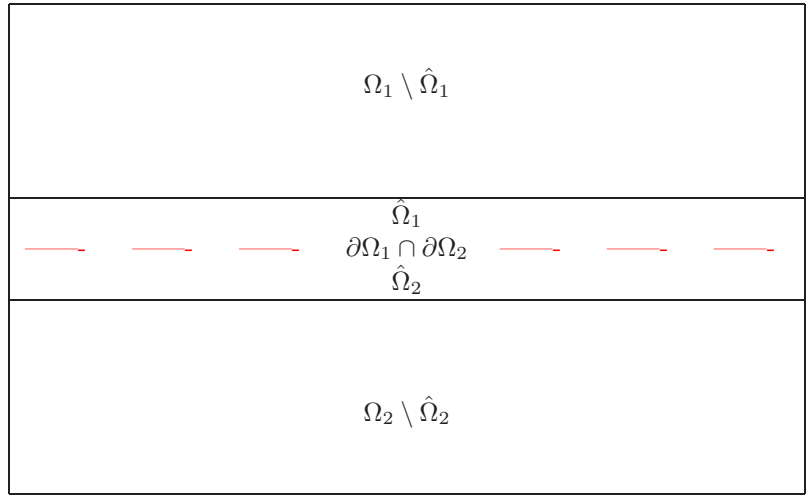
\begin{center}
\begin{tabular}{|p{1 cm}  p{1 cm} p{1 cm} c p{1 cm} p{1 cm} p{1 cm}|}
\hline
 & & & & & &\\
 & & & & & &\\

 & & & $\Omega_1\setminus\hat{\Omega}_1$  & & &\\
 & & & & & &\\
 & & & & & &\\
 & & & & & &\\  
 \hline
 & & &$\hat{\Omega}_1$ & & &\\ 
 \textcolor{red}{ ------- } & \textcolor{red}{ ------- } & \textcolor{red}{ ------- } & $\partial\Omega_1\cap\partial\Omega_2$ & \textcolor{red}{ ------- } & \textcolor{red}{ ------- }& \textcolor{red}{ ------- }\\
 & & & $\hat{\Omega}_2$ & & &\\  
 \hline
 & & & & & &\\
 & & & & & &\\
 & & & & & &\\ 
 & & & $\Omega_2\setminus\hat{\Omega}_2$ & & &\\
 & & & & & &\\
 & & & & & &\\
\hline
\end{tabular}\end{center}
\caption{Computation of $\eta$ only in the stripe $\hat{\Omega}_1\cup\hat{\Omega}_2$.}
\label{figill2}
\end{figure}
\end{center}

Hence, for the computation of the fixed point $\eta\in V_2$, we need to carry out the iteration $\eta^{(m+1)} = \pi_{V_2}P_{\alpha K_{|D(\cdot)|(\Omega)}}(\eta^{(m)}-z+u_2)$ only in $\Omega_1\cup\hat{\Omega}_2$. By further observing that $\eta$ will be supported only in $\Omega_2$, i.e. $\eta(x)=0$ in $\Omega_1$, we may additionally restrict the fixed point iteration on the relatively small stripe $\hat{\Omega}_1\cup\hat{\Omega}_2$, where $\hat{\Omega}_1\subset\Omega_1$ is an neighborhood around the interface from the side of $\Omega_1$. Although the computation of $\eta$ restricted to $\hat{\Omega}_1\cup\hat{\Omega}_2$ is not equivalent to the computation of $\eta$ on whole $\Omega_1\cup\hat{\Omega}_2$, the produced errors are in practice negligible, because of the Neumann boundary conditions involved in the computation of $P_{\alpha K_{|\nabla (\cdot)|(\Omega_1\cup\hat{\Omega}_2)}}$. Symmetrically, one operates on the minimizations on $\Omega_2$.

\medskip
\begin{figure}
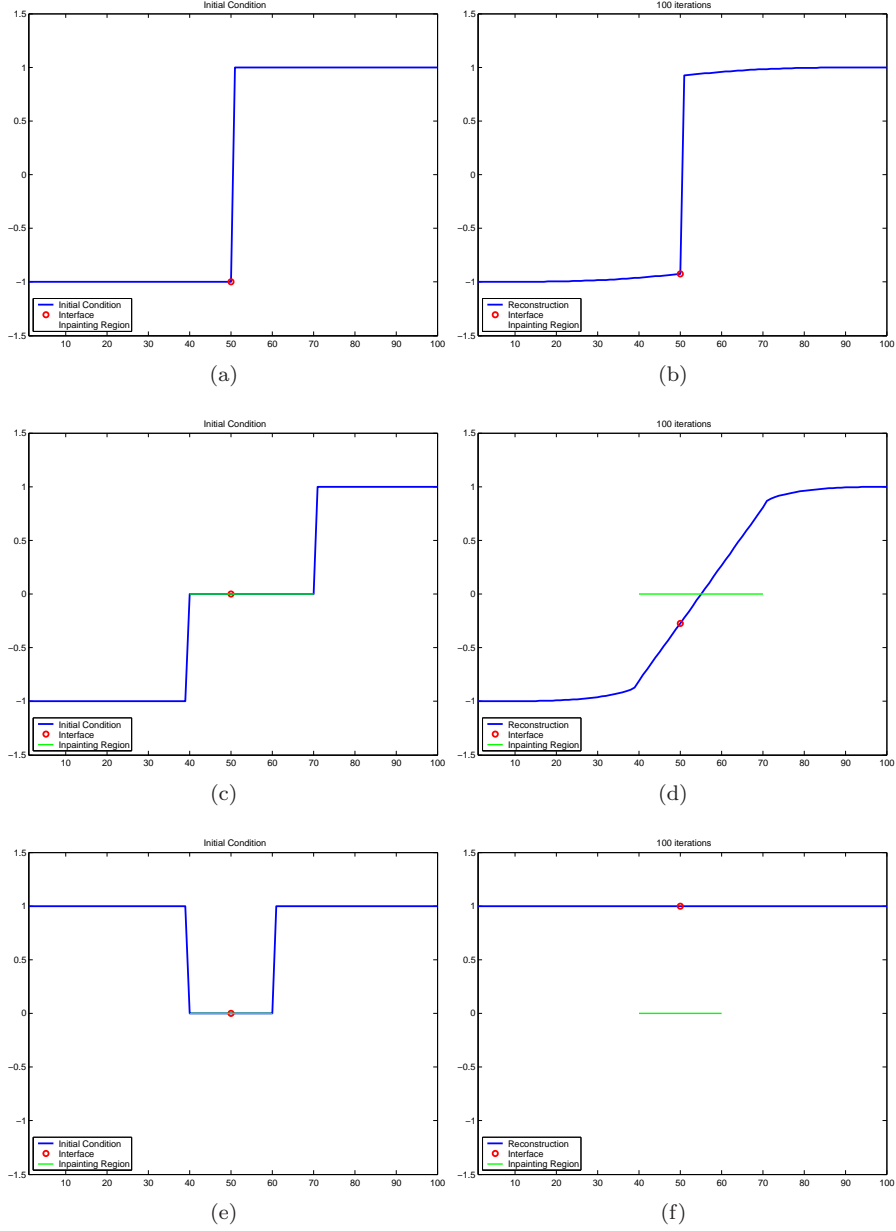

\begin{center}
\graphicspath{{./graphics/g_dode_sinsplit_1D_acc_lambda_0_timestepsize_0.25_end_100_subit5_itetamax10_projerr3_stripe3/}}
    \subfigure[]{\label{mm1}\includegraphics[height=4.7cm]{g.eps}}
    \subfigure[]{\label{mm2}\includegraphics[height=4.7cm]{u.eps}}\\
\graphicspath{{./graphics/g2_dode_sinsplit_1D_acc_lambda_0_timestepsize_0.25_end_100_subit5_itetamax10_projerr3_stripe10/}}    
    \subfigure[]{\label{mm3}\includegraphics[height=4.7cm]{g.eps}}
    \subfigure[]{\label{mm4}\includegraphics[height=4.7cm]{u.eps}}\\
\graphicspath{{./graphics/g3_dode_sinsplit_1D_acc_lambda_0_timestepsize_0.25_end_100_subit5_itetamax10_projerr3_stripe10/}}    
    \subfigure[]{\label{mm5}\includegraphics[height=4.7cm]{g.eps}}
    \subfigure[]{\label{mm6}\includegraphics[height=4.7cm]{u.eps}}    
\end{center}    
\caption{(a)-(f): Examples of the domain decomposition method for $TV-L^2$ denoising/inpainting in 1D where the domain was split in two domains with $\alpha=1$ and $\tau=1/4$}
\label{fignew1}
\end{figure}

\subsubsection{Numerical experiments in one and two dimensions}

We shall present  numerical results in one and two dimensions for the algorithm in \eqref{schw_sp:it2}, and discuss them with respect to the choice of parameters.

In one dimension we consider the same three signals already discussed for the ``naive'' approach in Figure \ref{fignew}. In the left column of Figure \ref{fignew1} we report again the one dimensional signals. The right column shows the result of the application of the domain decomposition method \eqref{schw_sp:it2} for total variation minimization. The support of the signals is split in two intervals. The interface developed by the two intervals is marked by a red dot. In all three examples we fixed $\alpha=1$ and $\tau=1/4$. The first example \ref{m1}-\ref{m2} shows a step function, which has its step directly at the interface of the two intervals. The total variation minimization   \eqref{schw_sp:it2} is applied with $T=I$. This example confirms that jumps are preserved at the interface of the two domains. The second and third example \ref{mm3}-\ref{mm6} present the behaviour of the algorithm when interpolation across the interface is performed. In this case the operator $T$ is given by the multiplier $T=1_{\Omega\setminus D}$, where $D$ is an interval containing the interface point. In contrast to the performance of the interpolation of the ``naive'' approach for the third example, Figure \ref{l5}-\ref{l6}), the new approach solves the interpolation across the interface correctly, see Figure \ref{mm5}-\ref{mm6}.
\begin{figure}
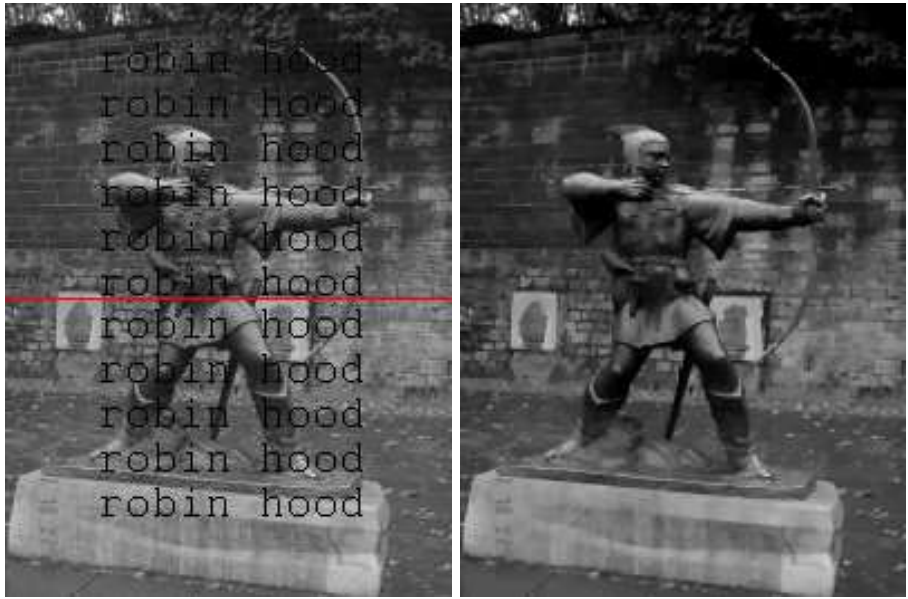

\begin{center}
\graphicspath{{./graphics/robin_hood_des1.png_dode_sinsplit_acc_lambda-2_dt0.25_stripe3_subit5_itetamax10_projerr3/}}
\includegraphics[height=8cm]{uinpaint.eps}\includegraphics[height=8cm]{u.eps}
\end{center}    
\caption{An example of $TV-L^2$ inpainting in where the domain was split in two with $\alpha=10^{-2}$ and $\tau=1/4$}
\label{fignewD2}
\end{figure}

\begin{figure}
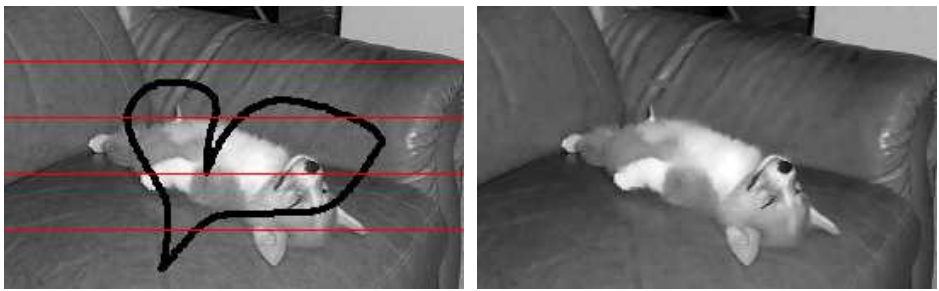

\begin{center}
\graphicspath{{./graphics/sleeping_dog_des2.png_dodemore_sinsplit_acc_D5_stripe3_lambda_-2_dt0.125_subit3_itetamax10/}}
\includegraphics[height=3.8cm]{uinpaint.eps} \includegraphics[height=3.8cm]{u.eps}
\end{center}    
\caption{An example of $TV-L^2$ inpainting in where the domain was splitted in five with $\alpha=10^{-2}$ and $\tau=1/4$}
\label{fignew2D3}
\end{figure}

Inpainting results for the two dimensional case are shown in Figures \ref{fignewD2}-\ref{fignew2D3}. The interface is here marked by a red line in the given image. In the first example in Figure \ref{fignewD2} the domain is split in two subdomains, in the second example in Figure \ref{fignew2D3} the domain is split in five subdomains. The Lagrange multiplier $\alpha>0$ is chosen $10^{-2}$. The time-step for the computation of $P_{\alpha K_{|\nabla(\cdot)|(\Omega)}}$ is chosen $\tau=1/4$. The examples confirm the correct reconstruction of the image at the interface, preserving both continuities and discontinuities as wanted.
Despite the fact that Theorem \ref{weak-conv} does not guarantee that the algorithm in \eqref{schw_sp:it2} can converge to a minimizer of $\mathcal J$ (unless one of the conditions in (iv) holds), it seems that for total variation minimization the result is always rather close to the expected minimizer.
\\

Let us now discuss the choice of the different parameters.
As a crucial issue in order to compute the solution at the interface $\partial\Omega_1\cap\partial\Omega_2$ correctly, one has to pay attention to the accuracy up to which the projection $P_{\alpha K_{|\nabla(\cdot)|(\Omega)}}$ is approximated and to the width of the stripe $\hat{\Omega}_1\cup\hat{\Omega}_2$ for the computation of $\eta$.
The alternating iterations \eqref{schw_sp:it2} in practice are carried out with $L=M=5$ inner iterations. The outer iterations are carried out until the error $\left|J(u^{(n+1)})-J(u^{(n)})\right|$ is of order $\mathcal{O}(10^{-10})$. The fixed point $\eta$ is computed by iteration \eqref{fixptit} in maximal $10$ iterations with initialization $\eta^{(0)}_n=0$ when $n=1$ and $\eta^{(0)}_{n+1}=\eta_n$, the $\eta$ computed in the previous iteration, for $n>1$. For the computation of the projection $P_{\alpha K_{|\nabla(\cdot)|(\Omega)}}$ by Chambolle's algorithm \eqref{chprojit} we choose $\tau=1/4$. Indeed Chambolle points out in \cite{Ch} that, in practice, the optimal constant for the stability and convergence of the algorithm is not $1/8$ but $1/4$. Further if the derivative along the interface is high, i.e., if there is a step along the interface, one has to be careful concerning the accuracy of the computation for the projection. The stopping criterion for the iteration \eqref{chprojit} consists in checking that the maximum variation between $p_{i,j}^n$ and $p_{i,j}^{n+1}$ is less than $10^{-3}$. With less accuracy, artifacts on the interface can appear. This error tolerance may need to be further decreased for $\alpha>0$ very large. Furthermore, the size of the stripe varies between $6$ and $20$ pixels also depending on the size of $\alpha>0$, and if either inpainting is carried out via the interface or not (e.g., the second and third example in Figure \ref{fignew1} failed in reproducing the interface correctly with a stripe of size $6$ but computed it correctly with a stripe of size $20$).

\subsection{Accelerated sparse recovery algorithms based on $\ell_1$-minimization}
\label{accel}
In this section we are concerned with applications of the algorithms described in the previous sections to the case where $\Lambda$ is a countable index set, $\mathcal H = \ell_2(\Lambda)$, and $\psi(u) = \| u\|_{\ell_{1}(\Lambda)}:= \sum_{\lambda \in \Lambda} | u_\lambda|$, compare Examples \ref{ex1}.2. In this case we are interested to the minimization of the functional
\begin{equation}
\label{sparse}
\mathcal J(u) := \| T u - g \|_{\ell_2(\Lambda)}^2 + 2 \alpha \|u\|_{\ell_1}.
\end{equation}
As already mentioned, iterative algorithms of the type (\ref{eq1}) can make the job, where $\mathbb S_\alpha^{\|\cdot\|_{\ell_1}} =  \mathbb S_\alpha$ is the soft-thresholding. Unfortunately, despite its simplicity which makes it very attractive to users, this algorithm does not perform very well.
For this reason the ``domain decomposition'' algorithm (\ref{schw_sp:it}) was proposed in \cite{fo07}, and there we proved its effectiveness in accelerating the convergence and we provided its parallelization. Here the domain is the label set $\Lambda$ which is disjointly decomposed into $\Lambda = \Lambda_1 \cup \Lambda_2$. This decomposition produces an orthogonal splitting of $\ell_2(\Lambda)$ into the subspaces  $V_i = \ell_2^{\Lambda_i}(\Lambda) :=\{u \in \ell_2(\Lambda): \supp(u) \subset \Lambda_i\}$, $i=1,2$.
We want to generalize this particular situation to an arbitrary orthogonal decomposition:\\
 Let $Q$ be an orthogonal operator on $\ell_2(\Lambda)$. With this operator we denote $Q_{\Lambda_i} := Q \pi_{\ell_2^{\Lambda_i}(\Lambda)}$.  Finally we can define $V_i := Q_{\Lambda_i} \ell_2(\Lambda)$ for $i=1,\dots,\mathcal N$. In particular, we can consider multiple subspaces, i.e., $\mathcal N \geq 2$, since the algorithms and their analysis presented in the previous sections can be easily generalized to these cases, see \cite[Section 6]{fo07}. For simplicity we assume that the subspaces have equal dimensions when $\dim \mathcal H < \infty$.
 Clearly the orthogonal projection onto $V_i$ is given by $\pi_{V_i} = Q_{\Lambda_i} Q^*_{\Lambda_i}$. Differently from the domain decomposition situation for which $Q=I$ and 
\begin{equation}
\label{nicesplit}
\| \sum_{i=1}^{\mathcal N} \pi_{V_i} u  \|_{\ell_1} = \sum_{i=1}^{\mathcal N}  \| \pi_{V_i} u \|_{\ell_1},
\end{equation}
for an arbitrary splitting, i.e., for  $Q\neq I$,  (\ref{nicesplit}) is not guaranteed to hold. Hence, an algorithm as in (\ref{schw_sp:it}) cannot anymore be applied and one has to use (\ref{schw_sp:it2}) or (\ref{schw_sp:it4}) instead.
In finite dimensions there are several ways to compute suitable operators $Q$. The constructions we consider in our numerical examples are given by $Q$ as the orthogonalization of a random matrix $\tilde Q$, e.g., via Gram-Schmidt, or the orthogonal matrix $Q=V$ provided by the singular value decomposition of $T = U D V^*$. 
Of course, for very large matrices $T$, the computation of the SVD is very expensive. In these cases, one may want to use the more efficient strategy proposed in \cite{ruve}, where $Q$ is constructed by computing the SVD of a relatively small submatrix of $T$ generated by random sampling.

The numerical examples presented in the following, refer to applications of the algorithms for the minimization of $\mathcal J$ where the operator $T$ is a random matrix $200 \times 40$ with Gaussian entries.

\subsubsection{Discussion on the convergence properties of the algorithm}
It is stated in the Theorems \ref{weak-conv} and \ref{weak-conv2} that the algorithms (\ref{schw_sp:it2}) or (\ref{schw_sp:it4}) may not converge  to a minimizer of $\mathcal J$ for an arbitrary orthogonal operator $Q$, while, in reason of (\ref{nicesplit}) and  condition (a) in Theorem \ref{weak-conv} (iv), such convergence is guaranteed for $Q=I$. 
\begin{center}
\begin{figure}[ht]

\hbox to \hsize {\hfill \epsfig{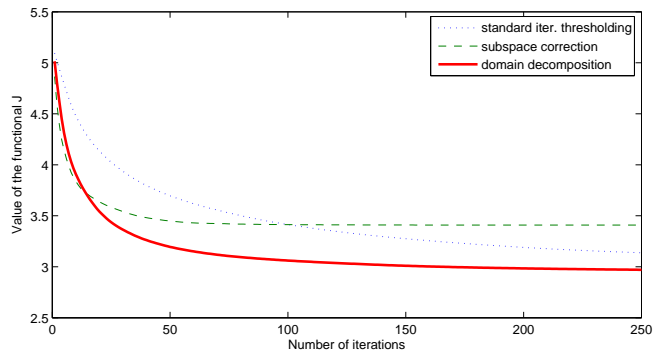} \hfill}
\caption{We show the application of the algorithms (\ref{ex1}), (\ref{schw_sp:it2}), and (\ref{schw_sp:it}), for the minimization of $\mathcal J$ where $T$ is a random matrix $200 \times 40$ with Gaussian entries. We considered in this example $\mathcal N=5$ subspaces $V_i$, $\alpha=0.005$, $30$ external iterations and $30$ internal iterations for the minimization on each $V_i$, $i=1,\dots,5$. We  fixed a maximal number of $20$ iterations in the approximate computation of the auxiliary $\eta$'s in (\ref{fixptit}). While (\ref{schw_sp:it}) converges to a minimizer of $\mathcal J$, this is not the case for (\ref{schw_sp:it2}).} \label{figure1}
\end{figure}
\end{center}

 \begin{center}
\begin{figure}[ht]

\hbox to \hsize {\hfill \epsfig{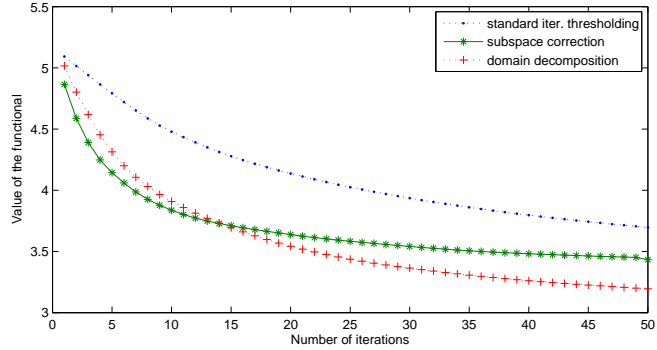} \hfill}
\caption{We show the application of the algorithms (\ref{ex1}), (\ref{schw_sp:it2}), and  (\ref{schw_sp:it}), for the minimization of $\mathcal J$ where $T$ is a random matrix $200 \times 40$ with Gaussian entries. We considered in this example $\mathcal N=5$ subspaces $V_i$, $\alpha=0.005$, $30$ external iterations and $30$ internal iterations for the minimization on each $V_i$, $i=1,\dots,5$. We  fixed a maximal number of $20$ iterations in the approximate computation of the auxiliary $\eta$'s in (\ref{fixptit}). In the first few iterations (\ref{schw_sp:it2}) converges faster than (\ref{schw_sp:it}).} \label{figure2}
\end{figure}
\end{center}
In Figure \ref{figure1} we can illustrate the different convergence behavior for $Q \neq I$ and for $Q=I$, by a comparison with the standard iterative thresholding algorithm (\ref{eq1}). Nevertheless, in several situations the computed solution due to (\ref{schw_sp:it2}) or (\ref{schw_sp:it4}) for $Q \neq I$ is very close to the wanted minimizer, especially for $\alpha>0$ relatively small.
 Moreover, it is important to observe that the choice of a suitable $Q$, for example the one provided by the singular value decomposition of $T$, does accelerate the convergence in the very first iterations, as we illustrate in Figure \ref{figure2}. In particular, within the first few iterations, most of the important information on the support of the minimal solution $u$ is recovered. This explains the rather significant acceleration of the convergence shown in Figure \ref{figure3} obtained by combining few initial iterations of the algorithm  (\ref{schw_sp:it2}) for the choice of $Q=V$ with successive iterations where the choice is switched to $Q=I$ in order to ensure convergence to minimizers of $\mathcal J$. This combined strategy proved to be extremely efficient and it is the one we consider in the rest of our discussion.
 \begin{center}
\begin{figure}[ht]

\hbox to \hsize {\hfill \epsfig{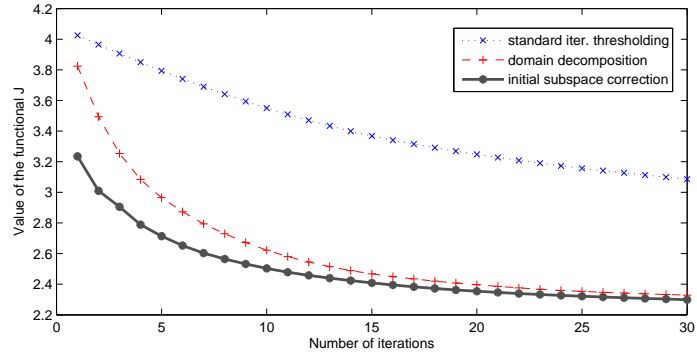} \hfill}
\caption{We show the application of the algorithms (\ref{ex1}), (\ref{schw_sp:it}), and (\ref{schw_sp:it2}) where the first 4 external iterations are performed with $Q=V$, followed by iterations where $Q=I$.  Again the minimization of $\mathcal J$ is performed assuming that $T$ is a random matrix $200 \times 40$ with Gaussian entries and the same parameters as in the previous figures. The starting acceleration due to the initial choice of $Q=V$ allows to recover sufficient information on the support of the sparse minimal solution, so that the following iterations for $Q=I$ do already perform significantly better than (\ref{schw_sp:it}).} \label{figure3}
\end{figure}
\end{center}

 \begin{center}
\begin{figure}[ht]

\hbox to \hsize {\hfill \epsfig{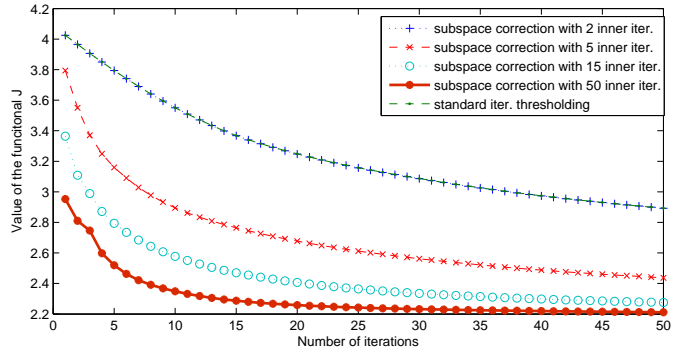} \hfill}
\caption{We show the application of the algorithms (\ref{ex1}) and (\ref{schw_sp:it2}) (with the switching from $Q=V$ to $Q=I$ as in the previous figure) for the minimization of $\mathcal J$ where $T$ is a random matrix $200 \times 40$ with Gaussian entries. We considered in this application of (\ref{schw_sp:it2})  $\mathcal N=10$ subspaces $V_i$, $\alpha=0.005$, $50$ external iterations and an increasing number of inner iterations for the minimization on each $V_i$, $i=1,\dots,5$. We  fixed a maximal number of $20$ iterations in the approximate computation of the auxiliary $\eta$'s in (\ref{fixptit}). We can observe that by increasing the number of inner iterations we can significantly improve the rate of convergence with respect to the external iterations.} \label{figure4}
\end{figure}
\end{center}

We developed further experiments for the evaluation of the behavior of the algorithm (\ref{schw_sp:it2}) with respect to other parameters, in particular the number of inner iterations for the minimization on each $V_i$, $i=1,\dots,\mathcal N$, and  the number $\mathcal N$ of subspaces. In Figure \ref{figure4} we show that by increasing the number of inner iterations we improve significantly the convergence with respect to the outer iterations. Of course, the improvement due to an increased number of inner iterations corresponds also to an increased computational effort. In order to counter-balance this additional cost one may consider a larger number of subspaces, that in turns implies a smaller dimension of each subspace. Indeed, note that inner iterations on subspaces with smaller dimension require a much less number of algebraic operations.
In Figure \ref{figure5} we show that by increasing the number of subspaces and, correspondingly, the number of inner iterations we do keep improving the convergence significantly. Hence, the \emph{parallel} algorithm (\ref{schw_sp:it4}) adapted to a large number of subspaces performs very fast as soon as the inner iterations are also increased correspondingly.

 \begin{center}
\begin{figure}[ht]

\hbox to \hsize {\hfill \epsfig{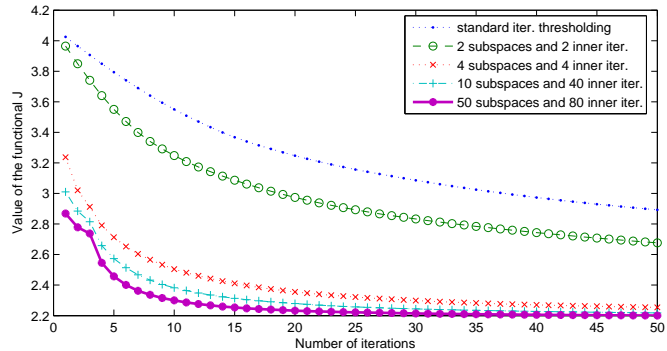} \hfill}
\caption{{We show the application of the algorithms (\ref{ex1}) and (\ref{schw_sp:it2}) (with the switching from $Q=V$ to $Q=I$ after $4$ external iterations) for the minimization of $\mathcal J$ where $T$ is a random matrix $200 \times 40$ with Gaussian entries. We considered in this application of (\ref{schw_sp:it2})  an increasing number $\mathcal N=2,4,10,50$ of subspaces $V_i$ and correspondingly an increasing number, $2,4,40,80$, of inner iterations. Again we fixed $\alpha=0.005$, $50$ external iterations, and a maximal number of $20$ iterations in the approximate computation of the auxiliary $\eta$'s in (\ref{fixptit}). We can observe that by increasing the number of subspaces and inner iterations we can significantly improve the rate of convergence with respect to the external iterations.}}\label{figure5}
\end{figure}
\end{center}

\section{Conclusion} 
Optimization of functionals promoting sparse recovery, e.g., $\ell_1$-minimization and total variation minimization (where the sparsity is at the level of derivatives), were proposed in order to extract few significant features of the solution originally defined in very high dimensions. As a matter of fact, these minimizations cannot be performed by ordinary methods when the dimension scale is extremely large, for speed, resources, and memory restrictions.  Hence, domain decomposition or subspace correction methods have to be invoked in these cases. Our work contributes to remedy the lack of such methods for these specific problems. We introduced parallel and alternating optimization algorithms on sequences of orthogonal subspaces of a Hilbert space, for the minimization of energy functionals  involving convex constraints coinciding with semi-norms for a subspace. We provided an efficient numerical method for the implementation of the algorithm via oblique thresholding, defined by suitable Lagrange multipliers. 
 It is important to notice that, on the one hand, these algorithms are realized by re-utilizing the basic building blocks of standard proximity-map iterations, e.g., projections onto convex sets, no significant complications in the implementations occur. On the other hand, several tricks can be applied in order to limit the computational load produced by the multiple iterations occurring on several subspaces (compare subsection \ref{tricks}).
We investigated the convergence properties of the algorithms, providing sufficient conditions for ensuring the convergence to minimizers.
We showed the applicability of these algorithms in delicate situations, like in domain decomposition methods for singular elliptic PDE's with discontinuous solutions in 1D and 2D, and in accelerations of $\ell_1$-minimizations. The numerical experiments nicely confirm the results predicted by the theory. 

\section*{Acknowledgments}

The authors thank Peter A. Markowich and the Applied Partial Differential Equations Research Group of the Department of Applied Mathematics
and Theoretical Physics, Cambridge University, for the hospitality and the fruitful discussions during the late preparation of this work.
M. Fornasier thanks Ingrid Daubechies for the intensive discussions on sparse recovery and the Program in Applied and Computational Mathematics, Princeton University, for the hospitality, during the early preparation of this work. M. Fornasier acknowledges the financial support provided by the European Union's Human Potential Programme under contract MOIF-CT-2006-039438. 
C.-B. Sch\"onlieb acknowledges the financial support provided by
the Wissenschaftskolleg (Graduiertenkolleg, Ph.D. program) of the Faculty
for Mathematics at the University of Vienna, supported by the Austrian
Science Fund.
The results of the paper also contribute to the project WWTF Five senses-Call 2006, Mathematical Methods for Image Analysis and Processing in the Visual Arts.

\providecommand{\bysame}{\leavevmode\hbox to3em{\hrulefill}\thinspace}
\providecommand{\MR}{\relax\ifhmode\unskip\space\fi MR }
\providecommand{\MRhref}[2]{%
  \href{http://www.ams.org/mathscinet-getitem?mr=#1}{#2}
}
\providecommand{\href}[2]{#2}

\end{document}